\definecolor{dark-red}{RGB}{161,0,155}
\definecolor{dark-blue}{rgb}{0.15,0.15,0.6}
\definecolor{dark-green}{rgb}{0.15,0.6,0.15}
\renewcommand*{\backref}[1]{}
\renewcommand*{\backrefalt}[4]{%
  \ifcase #1 %
    \relax
  \or
    $\uparrow$#2.%
  \else
    $\uparrow$#2.%
  \fi%
}
\newcommand{\Z}{\mathbb{Z}} 
\newcommand \rH {{\rm H}}
\numberwithin{equation}{section}
\newcommand \PP {{\mathbb P}^1}
\newcommand \ZZ {{\mathbb Z}}
\newcommand \CC {{\mathbb C}}
\newcommand \QQ {{\mathbb Q}}
\newcommand{\PSL}{{\rm PSL}}
\newcommand{\frakH}{\mathfrak{H}}
\newtheorem{lem}{Lemma}[section]
\newtheorem{lemma}[lem]{Lemma}
\newtheorem{theorem}[lem]{Theorem}
\newtheorem{corollary}[lem]{Corollary}
\newtheorem{example}[lem]{Example}
\newtheorem{proposition}[lem]{Proposition} 
\newtheorem{intheorem}{Theorem}
\theoremstyle{definition}
\newtheorem{remark}[lem]{Remark}
\newtheorem{goal}[lem]{Goal} 
\title[The classifying element for quotients of Fermat curves]{The classifying element for quotients of Fermat curves}
\author{Juanita Duque-Rosero}
\address{Boston University}
\email{juanita@bu.edu}
\author{Rachel Pries}
\address{Colorado State University}
\email{pries@colostate.edu}
\begin{document}

\begin{abstract}
Suppose $C$ is a cyclic Galois cover of the projective line branched at the three points $0$, $1$, and $\infty$. 
Under a mild condition on the ramification, we determine the structure of the graded Lie algebra 
of the lower central series of the fundamental group of $C$ in terms of a basis which is well-suited to studying 
the action of the absolute Galois group of $\QQ$.

MSC2020: primary 11D41, 11G32, 14F35, 14H30, 17B70;
secondary 11F06, 11F67, 11G30, 13A50, 14F20\\

Keywords: Curve, covering, Belyi curve, Fermat curve,
fundamental group, homology, lower central series, graded Lie algebra,
classifying element, absolute Galois group, Galois module, modular symbol.


\end{abstract}

\maketitle

\section{Introduction}

In this companion paper to \cite{DPWgreen},
the main goal is to determine information about the \'etale fundamental group of a cyclic Belyi curve using its lower central series.  To explain the meaning of this, we first provide some background.

Suppose $X$ is a smooth projective curve of genus $g$ defined over a number field $K$.
Let $\bar{K}$ be the algebraic closure of $K$ and suppose that $X_{\bar{K}}$ is connected.
Let $\pi = [\pi]_1 = \pi_1(X)$ be the \'etale fundamental group  
and let $\rH_1(X)$ be the \'etale homology group of $X_{\bar{K}}$.

For $m \geq 2$, let $[\pi]_m$ be the $m$th subgroup of the lower central series $\pi = [\pi]_1 \supset [\pi]_2 \supset [\pi]_3 \supset \cdots$;
specifically, $[\pi]_m=\overline{[\pi, [\pi]_{m-1}]}$ is the closure of the subgroup generated by 
commutators of elements of $\pi$ with elements of $[\pi]_{m-1}$. 
Then $\rH_1(X) \cong [\pi]_1/[\pi]_2$.

Consider the graded Lie algebra ${\rm gr}(\pi) = \oplus_{m \geq 1} [\pi]_m/[\pi]_{m+1}$ 
of the lower central series for $\pi$, \cite{lazard, serre65}.
Let $F$ be the free profinite group on $2g$ generators and consider its graded Lie algebra 
${\rm gr}(F)= \oplus_{m \geq 1} {\rm gr}_m(F)$. 
By \cite[Theorem, page 17]{labute70}, 
there is an element $\rho$ of weight $2$ such that 
\[{\rm gr}(\pi)  \cong {\rm gr}(F)/\overline{\langle \rho \rangle}.\] 

Thus ${\rm gr}(\pi)$ is determined by the subgroup $\langle \rho \rangle$.
By \cite[Corollary 8.3]{hain},
\[\displaystyle [\pi]_2/[\pi]_3 \cong \left(\rH_1(X) \wedge \rH_1(X)\right)/{\rm Im}(\mathscr{C}),\]
where 
\begin{equation}\label{defmapC}
{\mathscr{C}}\colon \rH_2(X) \to \rH_1(X) \wedge \rH_1(X)
\end{equation}
is the dual map to the cup product map $\rH^1(X) \wedge \rH^1(X) \to \rH^2(X)$.
Since $\rH_2(X)\cong\Z(1)$, the image $\mathrm{Im}(\mathscr{C})$ is cyclic. 
A generator $\Delta$ for $\mathrm{Im}(\mathscr{C})$ is called a {\it classifying element}.

There is a formula for $\Delta$ in terms of a set of generators of the 
fundamental group of $X$ that satisfies certain properties; see \eqref{Edelta} for details.  
In the context of \'etale homology groups, we would like additional information,
specifically the following.

\begin{goal} \label{G1}
Find a formula for a classifying element using a basis for $\rH_1(X)$ which is well-suited 
for studying the action of the absolute Galois group $G_K$ and the action of ${\rm Aut}(X)$. 
\end{goal}

In \cite{DPWgreen}, the authors realized this goal when $X=X_n\colon x^n+y^n=z^n$ is the Fermat curve of degree $n$, 
for any integer $n \geq 3$.
When $n=p$ is an odd prime satisfying Vandiver's conjecture, 
and $K=\QQ(\zeta_p)$ is the cyclotomic field, then the information about the action of $G_K$ on $\rH_1(X_p,\ZZ/p\ZZ)$
comes from \cite{anderson} and \cite{Baction}.

In this paper, we realize Goal~\ref{G1} when $X=W_{n,k}$ is a cyclic Belyi curve, 
namely a curve with affine equation 
\begin{equation} \label{EcyclicB}
v^n=u(1-u)^k,
\end{equation} for any odd integer $n$ and integer $k$ with $1 \leq k \leq n-2$.
This curve admits a Galois $\mu_n$-cover $\phi\colon W_{n,k} \to {\mathbb P}^1$ branched at $3$ points $0$, $1$,  and $\infty$. 
We restrict to the case that the cover is totally ramified at the ramification points $\eta_0$, $\eta_1$, and $\eta_\infty$;
this is true if and only if $\gcd(n,k(k+1))=1$; in particular, it is true for all $1 \leq k \leq n-2$ if $n$ is prime.

The main result of the paper is Theorem~\ref{Tmaintheorem}; writing $W=W_{n,k}$,
we determine a classifying element $\Delta \in \rH_1(W) \wedge \rH_1(W)$ 
for all such pairs $(n,k)$. 
This determines the isomorphism class of ${\rm gr}(\pi)$ as a graded Lie group with the action of 
$\mu_n \subset {\rm Aut}(W)$. 

Here is some notation needed to state the result.
Let $U=W - \eta_\infty$, where $\eta_\infty$ is the unique point not on the affine chart \eqref{EcyclicB}.
In Section~\ref{ScurveDef}, we define some loops $E_1, \ldots, E_{n-1}$ in the fundamental group $\pi_1(U)$ \eqref{Dloops}.
Let $[E_1], \ldots, [E_{n-1}]$ denote the images of $E_1, \ldots, E_{n-1}$ in the homology group $\rH_1(U)$.
In Corollary~\ref{CBasisEi}, we prove that these form a basis for $\rH_1(U)$.
This basis for $\mathrm{gr}_1(F) \cong \rH_1(W)\cong \rH_1(U)$
yields a basis for ${\rm gr}_2(F) \cong \rH_1(W) \wedge \rH_1(W)$ given 
by $\{[E_i] \wedge [E_j] \mid 1 \leq i < j \leq n-1\}$. 
We state our main result in terms of that basis. 

\begin{intheorem}[Theorem~\ref{Tmaintheorem}] \label{Tintro}  
Suppose $1 \leq k \leq n-2$ and $\mathrm{gcd}(n, k(k+1))=1$. 
Let $W$ be the smooth projective curve with affine equation $v^n=u(1-u)^k$.

Let $c$ be the integer such that $1 \leq c \leq n-1$ and $c$ is the multiplicative inverse of $k+1$ modulo $n$.
Then a classifying element $\Delta$ for $W$ is given by 
\[\Delta= \sum_{1 \leq I < J \leq n-1} c_{I,J} [E_I] \wedge [E_J],\]
where 
\[c_{I,J} = \begin{cases}
- 1 & \text{ if } J-I \equiv j(k+1) - 1 \bmod{n},\text{ or}\\
+ 1  & \text{ if } J-I  \equiv j(k+1) \bmod{n}, 
\end{cases}\]
for some $j$ such that $1 \leq j \leq c-1$; and $c_{I,J}=0$ otherwise.
\end{intheorem}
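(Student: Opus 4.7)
The plan is to determine $\Delta$ by computing, in the one-relator presentation of $\pi_1(W)$ as a quotient of $F \colonequals \pi_1(U)$, the projection of the defining relator to $\rH_1(U) \wedge \rH_1(U)$. By Corollary~\ref{CBasisEi}, the group $F$ is free on $E_1, \dots, E_{n-1}$, and the relator is represented by a small loop $\epsilon_\infty$ around the puncture $\eta_\infty$. Since $\rH_1(U) \to \rH_1(W)$ is an isomorphism and $\epsilon_\infty$ is nullhomologous in $W$, we have $\epsilon_\infty \in [F, F]$. Formula~\eqref{Edelta} then identifies the class of $\epsilon_\infty$ in $[F,F]/[F,[F,F]] \cong \rH_1(U) \wedge \rH_1(U)$ with $\Delta$.

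The first step is to make the $\mu_n$-cover structure explicit. Restricting $\phi$ yields a Galois $\mu_n$-cover $U \setminus \{\eta_0, \eta_1\} \to \mathbb{A}^1 \setminus \{0, 1\}$ classified by the character sending $\gamma_0 \mapsto 1$ and $\gamma_1 \mapsto k$ in $\ZZ/n\ZZ$, so the monodromy of $\gamma_\infty = (\gamma_0 \gamma_1)^{-1}$ is $-(k+1)$; by the hypothesis $\gcd(n, k+1) = 1$ this has order $n$. Consequently the lift of $\gamma_\infty^n$ to $U$ is a closed loop representing $\epsilon_\infty^{\pm 1}$ that cycles once through all $n$ sheets in the order dictated by the powers of $k+1$ modulo $n$.

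The next step is to expand this lifted loop as a word in $E_1, \dots, E_{n-1}$ via Reidemeister--Schreier, choosing coset representatives indexed by $0, k+1, 2(k+1), \dots, (n-1)(k+1) \bmod n$. The segment between the $j$-th and $(j+1)$-st sheet is rewritten using the generators whose indices are the residues $j(k+1)$ and $j(k+1) - 1 \bmod n$; these are precisely the differences $J - I$ appearing in the formula, with $j$ ranging over $1, \dots, c - 1$ since $c$ is the multiplicative inverse of $k+1 \bmod n$. This is the step I expect to be the main obstacle: the combinatorics of the expansion must produce each generator in the correct position with the correct sign, so that after projection most commutator terms collapse and only the listed pairs $(I,J)$ survive with the stated $\pm 1$.

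The final step is routine in principle: pass from $F$ to $\rH_1(U) \wedge \rH_1(U)$ via $[u, v] \mapsto [u] \wedge [v]$, telescope the resulting sum of commutators, and reindex so that $I < J$, tracking sign changes from wedge antisymmetry. The ``positive'' contributions give the $+1$ coefficients with $J - I \equiv j(k+1) \bmod n$, and the ``negative'' contributions give the $-1$ coefficients with $J - I \equiv j(k+1) - 1 \bmod n$. As a sanity check, in the Fermat case $k = 1$ we have $c = (n+1)/2$, and the formula should specialize to the classifying element of $X_n$ computed in \cite{DPWgreen}.
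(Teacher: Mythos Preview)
Your plan is essentially the paper's own argument: write the loop $c_\infty$ around $\eta_\infty$ explicitly as a word in the $E_i$ by lifting through the $n$ sheets (this is Proposition~\ref{Pwhichloop}), analyze combinatorially which $E_j^{\pm 1}$ lie between each pair $E_i^{-1}$, $E_i$ (Proposition~\ref{Pbetween}), and project to $\wedge^2 \rH_1(U)$ via Lemmas~\ref{Lleeway} and~\ref{Lsimplify}. The step you flag as the ``main obstacle'' is exactly Proposition~\ref{Pbetween}, and the paper handles it just as you suggest, by tracking the sheet order until the index $0$ recurs at step $c$.

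A few imprecisions to fix. First, Corollary~\ref{CBasisEi} gives a $\ZZ$-basis of $\rH_1(U)$, not free generators of $\pi_1(U)$; fortunately you never need the latter, since Lemma~\ref{Lleeway} works with any expression of $T$ as a product of commutators and only uses homology classes. Second, the sheets are visited in \emph{multiples} of $-(k+1)$ (arithmetic progression), not powers; accordingly the $j$-th segment $L_j$ carries indices $-j(k+1)+1$ and $-j(k+1)\bmod n$, and it is only after the ``between $E_i^{-1}$ and $E_i$'' bookkeeping and the final sign flip $T=c_\infty^{-1}$ that the differences $J-I\equiv j(k+1)$ and $j(k+1)-1$ emerge. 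Third, your sanity check is off: $W_{n,1}$ is a quotient of the Fermat curve $X_n$, not $X_n$ itself, so the $k=1$ formula here is not the classifying element of \cite{DPWgreen} but rather the one in Section~\ref{Spictures}.
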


For the proof, we first rely on Lemma~\ref{Lleeway}, which states that a formula for $\Delta$ can be found by 
expressing a loop $L$ around $\eta_\infty$ as a product of commutators of elements in $\pi_1(U)$.
Then the main ingredients of the proof are the topology and Galois theory of branched coverings.
In Section~\ref{S3}, we use these to find a combinatorial formula for $L$; the formula is first described using 
edges that generate the fundamental groupoid of $U$ with respect to $\{\eta_0, \eta_1\}$, 
and then re-expressed in terms of the loops $E_1, \ldots, E_{n-1}$.

In Section~\ref{Sexamples}, we provide examples for arbitrary odd $n$ and certain values of $k$.

\begin{remark} \label{Rtdbu}
The motivation for studying covers of $\mathbb{P}^1$ 
branched at three points originates with Grothendieck's 
program for understanding the absolute Galois group of $\QQ$.
The cyclic Belyi curves play an important role in the study of Galois theory, Hurwitz spaces, 
and abelian varieties with complex multiplication. 
Since $W=W_{n,k}$ is a quotient of the Fermat curve $X_n$, 
it might be possible to prove Theorem~\ref{Tintro} with a top-down approach, using the result of \cite{DPWgreen}. 
In working on this problem, we realized that there are many advantages with the direct approach, see Remark~\ref{RRtdbu}.
\end{remark}

\begin{remark} \label{Rhomologycoefficients}
Theorem~\ref{Tintro} is valid both for the homology with coefficients in $\ZZ$
and for the \'etale homology which has coefficients in a finite or $\ell$-adic ring.
In Section~\ref{Smodular}, the proof
relies on the modular symbols of Manin \cite{Manin} (and a result of Ejder \cite{E:integral}).
However, we follow an approach which is compatible with the results in \cite{anderson} 
and \cite{Baction} about the \'etale homology with coefficients in $\ZZ/n\ZZ$ and 
the action of the absolute Galois group upon it, because this will be important in future applications.

After choosing an embedding $\QQ \subset \mathbb{C}$ and applying Riemann's Existence Theorem, 
we may identify the profinite completion of $\rH_1(U(\mathbb{C}))$ with the \'etale homology $\rH_1(U)$.
Similarly, we may identify the profinite completion of $\pi_1(U(\mathbb{C}))$ with the
\'etale fundamental group $\pi_1(U)$.
Thus we can consider the elements $E_1, \ldots, E_{n-1}$ to be in the topological fundamental group 
or in the \'etale fundamental group; 
similarly, we can consider the elements $[E_1], \ldots, [E_{n-1}]$ to be in the simplicial homology or in the 
\'etale homology.
A similar comparison holds for other objects in the paper.
\end{remark}

\subsection*{Acknowledgments}
We would like to thank the anonymous referee for their thoughtful and constructive feedback.  Duque-Rosero was supported by a grant from the Simons Foundation (550029, to Voight).  Pries was supported by NSF grant DMS-22-00418.

\section{The fundamental group of cyclic Belyi curves}\label{ScurveDef}

We describe the geometry of cyclic Belyi curves and their relationship to Fermat curves.
We state some facts about the fundamental group and the classifying element $\Delta$.

Let $n \geq 3$ be a positive integer.
Let $\zeta = e^{2 \pi i/n}$ be a fixed primitive $n$th root of unity.
Fix an integer $k$, with $1 \leq k \leq n-2$.
For simplicity, we assume throughout the paper that $\mathrm{gcd}(n, k(k+1))=1$; 
this assumption is true if $n$ is prime, which is the situation of future applications of this paper.

\subsection{The geometry of cyclic Belyi curves}

Let $W=W_{n,k}$ be the smooth projective curve having affine equation:
\begin{equation} \label{EcyclicBelyi}
v^n = u(1-u)^k.
\end{equation}

Let $\eta_0$ be the point $(u,v)=(0,0)$; let $\eta_1$ be the point $(u,v)=(1,0)$.  
The hypothesis that $\mathrm{gcd}(n,k+1)=1$ implies that there is a unique point $\eta_\infty$ on $W$ 
which is not on this affine chart.
Consider the open affine subset $U = W - \eta_\infty$.

There is a $\mu_n$-Galois cover $\phi\colon W \to \mathbb{P}^1$, given by $\phi(u,v) \mapsto u$.
The Galois group is generated by the automorphism $\epsilon \in \mathrm{Aut}(W)$ of order $n$ that acts by 
$\epsilon((u,v))=(u, \zeta v)$.
The cover $\phi$ is totally ramified at the points $\eta_0$, $\eta_1$, and $\eta_\infty$, which lie over the 
branch points $u=0,1,$ and $\infty$ respectively.

By the Riemann-Hurwitz formula, the genus of $W$ is $g=(n-1)/2$.

Any $\mu_n$-cover of ${\mathbb P}^1$ branched at three points, which is totally ramified at one point, 
admits an equation of the form \eqref{EcyclicBelyi} for some $k$ with $1 \leq k \leq n-2$.  
The condition of being totally ramified over the other two branch points is equivalent to
$\mathrm{gcd}(n, k(k+1))=1$.

\subsection{The fundamental group} 
Throughout the paper, composition of paths and loops is denoted by the symbol $\cdot$ and written from left to right. 
Note that $U \subset W$ is a real surface of genus $g=(n-1)/2$ with $1$ puncture.
We choose the base point $\eta_1$.
There exist loops $a_i, b_i$ for $1 \leq i \leq g$ and $c_\infty$, with base point $\eta_1$, such that 
$\pi_1(U)$ has a presentation 
\begin{equation}\label{pi1UWpresentation}
\pi_1(U) = \langle a_i, b_i, c_\infty \mid i=1,\ldots,g \rangle/\prod_{i=1}^g [a_i,b_i] c_\infty.\end{equation}

Without loss of generality, we choose the loop $c_\infty$ to circle the puncture $\eta_\infty$
and to have no set-theoretic intersection 
with the loops $a_i, b_i$ for $1 \leq i \leq g$. 
This can be arranged using a standard gluing of a $1$-punctured
polygon with $4g$ sides, with the sides labeled consecutively by 
$a_1, b_1, a_1^{-1}, b_1^{-1}, \ldots, a_g, b_g, a_g^{-1}, b_g^{-1}$.

The homology group $\rH_1(U)$ is equipped with an intersection pairing, defined using Poincar\'e duality 
and the cup product on compactly supported cohomology.
Let $\bar{a}_i, \bar{b}_i, \bar{c}_\infty$ denote the images of $a_i, b_i, c_\infty$ 
in $\rH_1(U)$.  Note that $\bar{c}_\infty$ is trivial. 

Without loss of generality,
we can suppose that the images of $\bar{a}_i, \bar{b}_i$ in $\rH_1(U)$ form a standard symplectic basis.
Since $U$ has only one puncture, there is an isomorphism $\rH_1(U) \cong \rH_1(W)$.
These two facts imply that a generator of ${\rm Im}({\mathscr C})$ as in \eqref{defmapC} can be identified with:
\begin{equation} \label{Edelta}
\Delta_W = \sum_{i=1}^{g} \bar{a}_i \wedge \bar{b}_i \in \rH_1(U) \wedge \rH_1(U).
\end{equation}

\subsection{The second graded quotient in the lower central series}

By \eqref{Edelta}, $\Delta_W= \sum_{i=1}^g \bar{a}_i \wedge \bar{b}_i$.
We would like to determine $\Delta_W$ in terms of a basis of $\rH_1(U) \wedge \rH_1(U)$ for which 
we have information about the action of the absolute Galois group.
To do this, we investigate the element $T \colonequals  \prod_{i=1}^g [a_i,b_i] = c_\infty^{-1}$ in $\pi_1(U)$.

We need the following two results.
The first shows that $\Delta_W$ does not depend on the representation
as a product of commutators.

\begin{lemma} \label{Lleeway} \cite[Lemma~2.2]{DPWgreen}
Suppose $r_1, \ldots, r_N, s_1 \ldots, s_N$ are loops in $U$, 
with images $\bar{r}_i, \bar{s}_i$ in $\rH_1(U)$.
If
\[T \ {\rm is \ homotopic \ to \ } [r_1, s_1] \cdot \cdots \cdot [r_N, s_N],\] 
then $\sum_{i=1}^g \bar{a}_i \wedge \bar{b}_i = \sum_{i=1}^N \bar{r}_i \wedge \bar{s}_i$ in 
$\rH_1(U) \wedge \rH_1(U)$.
\end{lemma}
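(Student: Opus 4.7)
My plan is to exploit the fact that $U$ is a once-punctured orientable surface, so $\pi \colonequals \pi_1(U)$ is a free group on $2g$ generators. By classical work of Magnus and Witt, the associated graded Lie algebra $\mathrm{gr}(\pi) = \oplus_{m \geq 1} [\pi]_m/[\pi]_{m+1}$ is the free Lie algebra on $\rH_1(U) = [\pi]_1/[\pi]_2$. In particular, the commutator pairing induces a natural isomorphism
\[
\beta\colon \rH_1(U) \wedge \rH_1(U) \longrightarrow [\pi]_2/[\pi]_3, \qquad \bar{x} \wedge \bar{y} \longmapsto [x,y] \bmod [\pi]_3.
\]
That the pairing is well-defined---biadditive and alternating---is immediate from the commutator identities $[xy, z] \equiv [x,z]\,[y,z]$ and $[x,x] = 1$ modulo $[\pi]_3$.

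With $\beta$ in hand, the proof reduces to bookkeeping. Because $[\pi]_2/[\pi]_3$ is abelian, a product of commutators in $[\pi]_2$ collapses modulo $[\pi]_3$ to the sum of those commutators. Applying this to $T = \prod_{i=1}^g [a_i, b_i]$ shows that the class of $T$ in $[\pi]_2/[\pi]_3$ equals $\beta\bigl(\sum_{i=1}^g \bar{a}_i \wedge \bar{b}_i\bigr)$. The hypothesis that $T$ is also homotopic to $\prod_{i=1}^N [r_i, s_i]$ means the same class equals $\beta\bigl(\sum_{i=1}^N \bar{r}_i \wedge \bar{s}_i\bigr)$, and injectivity of $\beta$ then yields the desired equality in $\rH_1(U) \wedge \rH_1(U)$.

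The main hurdle is proving that $\beta$ is injective; surjectivity is automatic since $[\pi]_2$ is generated by commutators. Injectivity is precisely where the hypothesis that $\pi$ is free enters: for a closed surface group one has the relation $\prod [a_i, b_i] = 1$, which would make $\beta$ fail to be injective. For the free group $\pi_1(U)$, however, injectivity follows from the Magnus embedding into the free associative algebra, or equivalently from the identification of $\mathrm{gr}(\pi)$ with the free Lie algebra on $\rH_1(U)$. The argument is coefficient-independent and so applies equally to the topological and \'etale settings, as required by Remark~\ref{Rhomologycoefficients}.
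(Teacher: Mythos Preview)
Your argument is correct. The paper itself does not give a proof of this lemma; it simply cites \cite[Lemma~2.2]{DPWgreen}. Your approach---using that $\pi_1(U)$ is free on $2g$ generators (so that by Magnus--Witt the commutator pairing $\beta\colon \rH_1(U)\wedge\rH_1(U)\to[\pi]_2/[\pi]_3$ is an isomorphism), and then reading both expressions for $T$ as products of commutators through $\beta$---is the standard and natural one, and is almost certainly what the companion paper does as well.
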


The next lemma will help simplify later calculations.  

\begin{lemma} \label{Lsimplify} \cite[Lemma~2.3]{DPWgreen}
Suppose $\alpha, \beta, \gamma \in \pi_1(U)$. 
\begin{enumerate}
\item If $\alpha \gamma \in [\pi_1(U)]_2$, then $\gamma \alpha \in [\pi_1(U)]_2$, and
$\alpha \gamma$ and $\gamma \alpha$ have the 
same image in the quotient $[\pi_1(U)]_2/[\pi_1(U)]_3$.

\item If $\gamma^{-1} \alpha \gamma \beta \in [\pi_1(U)]_2$, then $\alpha \beta \in [\pi_1(U)]_2$, and
the difference between the images of $\gamma^{-1} \alpha \gamma \beta$ and $\alpha \beta$ in 
$[\pi_1(U)]_2/[\pi_1(U)]_3$
is $\gamma \wedge (-\alpha)$.
\end{enumerate}
\end{lemma}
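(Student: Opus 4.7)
The plan is to apply Lemma~\ref{Lleeway}: it suffices to exhibit the loop $T = c_\infty^{-1}$, which bounds a neighborhood of $\eta_\infty$, as a product of commutators of explicit loops on $U$, and then read off $\Delta$ as the sum of wedges of their homology classes. Since the $[E_i]$ already form a basis for $\rH_1(U)$, if I can rewrite $T$ as a product of commutators whose entries are words in the $E_i$, bilinearity of $\wedge$ forces the resulting answer to lie in the basis $\{[E_I]\wedge[E_J]\}_{I<J}$, and the task reduces to identifying the coefficients $c_{I,J}$.

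The construction of such an expression for $T$ proceeds through the Galois cover $\phi\colon W\to\mathbb{P}^1$. From $v^n = u(1-u)^k$, the local monodromies at $u=0,1,\infty$ are $\epsilon$, $\epsilon^k$, and $\epsilon^{-(k+1)}$ respectively; since $\gcd(n,k+1)=1$, a simple loop around $\eta_\infty$ on $U$ projects to a $c$-fold iterate of the standard loop $\ell_\infty = (\ell_0\ell_1)^{-1}$ on $\mathbb{P}^1-\{0,1,\infty\}$, where $c$ is the inverse of $k+1$ modulo $n$. I would lift this iterate using the fundamental groupoid of $U$ with base set $\{\eta_0,\eta_1\}$, as described in Section~\ref{S3}: each lift of $\ell_0$ or $\ell_1$ becomes a directed edge whose endpoints are determined by the Galois action, and the $c$-fold iterate closes up into a loop based at $\eta_1$ homotopic to $T$. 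Inserting conjugating paths between $\eta_0$ and $\eta_1$, and applying Lemma~\ref{Lsimplify}(1) to cyclically reorder factors, then converts this groupoid word into a word in the basis loops $E_i^{\pm 1}$ of $\pi_1(U)$.

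To extract $\Delta$ from this word, I would apply Lemma~\ref{Lsimplify} iteratively to push the resulting element of $[\pi_1(U)]_2$ into $[\pi_1(U)]_2/[\pi_1(U)]_3 \cong \rH_1(U)\wedge\rH_1(U)$. Because $\epsilon$ cyclically shifts the indices of the $E_i$ by $k+1$ modulo $n$, the successive factors in the word for $T$ have indices forming an arithmetic progression with common difference $k+1$. As $j$ runs from $1$ to $c-1$, the pairs contributed have index difference $j(k+1)$ (with sign $+1$) when the paired factors come from matching types of lifts, and $j(k+1)-1$ (with sign $-1$) when they straddle an $\ell_0$--$\ell_1$ transition; antisymmetrizing so that $I<J$ then produces the claimed formula.

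The hard part will be the combinatorial bookkeeping for this $c$-fold iteration. The lift of $(\ell_0\ell_1)^{-c}$ is a word of length roughly $2c$ in fundamental-groupoid edges, and each application of Lemma~\ref{Lsimplify}(2) needed to turn a conjugated element into a product of loops introduces a correction wedge $\gamma\wedge(-\alpha)$ that has to be tracked and assembled. Recognizing the total as the compact closed form indexed by $1\le j\le c-1$, with the precise $\pm 1$ dichotomy, is where the arithmetic of the inverse $c$ of $k+1$ modulo $n$ enters decisively, and ensuring that no ``off-diagonal'' pairs survive is likely the most delicate step; the modular-symbol framework referenced in Remark~\ref{Rhomologycoefficients} should serve as an independent consistency check on the final coefficients.
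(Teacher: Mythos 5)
Your proposal does not prove the statement in question. The statement is Lemma~\ref{Lsimplify}, a purely group-theoretic fact about the lower central series of $\pi_1(U)$; what you have written is an outline of the proof of Theorem~\ref{Tmaintheorem}, and it explicitly \emph{invokes} Lemma~\ref{Lsimplify} (both parts) as a tool. You cannot prove a lemma by describing how you would use it downstream. Nothing in your text addresses why $\alpha\gamma\in[\pi_1(U)]_2$ implies $\gamma\alpha\in[\pi_1(U)]_2$ with the same image modulo $[\pi_1(U)]_3$, nor why the conjugated product $\gamma^{-1}\alpha\gamma\beta$ differs from $\alpha\beta$ by exactly $\gamma\wedge(-\alpha)$.

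The argument that is actually needed is short. For (1), write $\gamma\alpha=\gamma(\alpha\gamma)\gamma^{-1}$; since $[\pi_1(U)]_2$ is normal, $\gamma\alpha\in[\pi_1(U)]_2$, and since conjugation by any element of $\pi_1(U)$ acts trivially on $[\pi_1(U)]_2/[\pi_1(U)]_3$ (for $x\in[\pi]_m$ and $g\in\pi$ one has $gxg^{-1}x^{-1}\in[\pi]_{m+1}$ by definition of the lower central series), the two images agree. For (2), observe the identity $\gamma^{-1}\alpha\gamma\beta=[\gamma^{-1},\alpha]\,\alpha\beta$ with $[\gamma^{-1},\alpha]=\gamma^{-1}\alpha\gamma\alpha^{-1}\in[\pi_1(U)]_2$; hence $\gamma^{-1}\alpha\gamma\beta\in[\pi_1(U)]_2$ if and only if $\alpha\beta$ is, and the difference of their images in $[\pi_1(U)]_2/[\pi_1(U)]_3\hookleftarrow$ (the image of) $\rH_1(U)\wedge\rH_1(U)$ is the class of the commutator $[\gamma^{-1},\alpha]$, namely $(-\gamma)\wedge\alpha=\gamma\wedge(-\alpha)$. (The paper itself cites \cite[Lemma~2.3]{DPWgreen} rather than reproving this, but the above is the content.) I would also caution that, as an outline of Theorem~\ref{Tmaintheorem}, your sketch misstates a detail: the lifted loop around $\eta_\infty$ winds $n$ times over the base loop, not $c$ times; $c$ enters only in counting which $E_j^{\pm1}$ fall between $E_i^{-1}$ and $E_i$ (Proposition~\ref{Pbetween}).
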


\subsection{The fundamental groupoid}

More generally, we consider the fundamental groupoid $\pi_1(U, \{\eta_0, \eta_1\})$ of $U$ 
with respect to the points $\eta_0$ and $\eta_1$. 
Let $\beta_W$ be the path in $U$, 
which begins at the base point $\eta_0$ and ends at $\eta_1$, 
given by 
\begin{equation} \label{Ebeta}
\beta_W =\left(t, \sqrt[n]{t(1-t)^k}\right) \ {\rm for} \ t \in [0,1].
\end{equation}
Here the symbol $\sqrt[n]{t(1-t)^k}$ denotes the real-valued and positive $n$th root.

Recall that $\epsilon((u,v)) = (u, \zeta v)$.
For $0 \leq i \leq n-1$, we define a path in $U$, which begins at $\eta_0$ and ends at $\eta_1$ by
\begin{equation} \label{Ebetaij}
e_{i} = \epsilon^i \beta_W.
\end{equation}

Define $\tau = \beta_W^{-1}$, where the 
inverse of a path is the path traversed in the opposite direction. 
Note that $e_i^{-1} = \epsilon^i \beta_W^{-1}$.

We define some loops in $U$ with base point $\eta_1$:
for $0 \leq i \leq n-1$, let 
\begin{equation} \label{Dloops}
E_i \colonequals e_i^{-1} \cdot e_0 = \epsilon^i\tau \cdot \tau^{-1}.
\end{equation}

The loop $E_i$ implicitly depends on $k$.
Note that
\begin{equation} \label{Elittleebige}
E_i \cdot E_j^{-1} = e_i^{-1} \cdot e_0 \cdot e_0^{-1} \cdot e_j= e_{i}^{-1} \cdot e_j.
\end{equation}

If $i=0$, then $E_i$ is trivial in $\pi_1(U, \{\eta_0, \eta_1\})$.
Corollary~\ref{CBasisEi} shows that the converse is true.  

\subsection{Comparison with the Fermat curve}

It is well-known that $W=W_{n,k}$ is a quotient of the Fermat curve $X_n\colon x^n+y^n=z^n$ of degree $n$ 
(see, e.g.\ \cite[Chapter~8]{Murty}). 
Let $Z_F$ be the set of $n$ points where $z=0$ on $X_n$.
The open affine subset $U_F=X_n - Z_F$ is given by the set of points 
$(x,y)$ such that $x^n+y^n=1$.  

In \cite[(2.g)]{DPWgreen}, the authors defined a path $\beta$ in $U_F$.
We remark that $\beta_W$ is the image of $\beta$ under the map $U_F \to U$ that takes $(x,y)$ to $(x^n,xy^k)$. 

The automorphism group of $X_n$ contains two automorphisms $\epsilon_0$ and $\epsilon_1$ of order $n$ that 
commute; these act by $\epsilon_0((x,y)) = (\zeta x, y)$ and $\epsilon_1((x,y)) = (x, \zeta y)$.
Let $H=H_{n,k}$ be the subgroup of $\mathrm{Aut}(X_n)$ generated by $h=\epsilon_0 \epsilon_1^{- k^{-1} \bmod n}$.

\begin{lemma} \label{Lquotient}
The cyclic Belyi curve $W_{n, k}$ is the quotient of the Fermat curve $X_n$ of degree $n$ by $H_{n,k}$.
The fiber of $X_n$ over $\eta_\infty$ is the set of $n$ points in $Z_F$; the fiber of $X_n$ over $\eta_0$ (resp.\ $\eta_1$)
is the set of $n$ points on $U_F$ where $x=0$ (resp.\ $y=0$).
\end{lemma}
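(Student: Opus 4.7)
The plan is to exhibit an explicit $H_{n,k}$-invariant morphism $\psi\colon X_n \to W_{n,k}$ and then compare its degree with $|H_{n,k}|$ to identify $W_{n,k}$ with the quotient $X_n/H_{n,k}$. On the affine chart $U_F = \{x^n + y^n = 1\}$, I would set $\psi(x,y) = (x^n, xy^k)$; writing $u = x^n$ and $v = xy^k$, the identity $v^n = x^n y^{nk} = u(1-u)^k$ (using $1-u = y^n$) shows that $\psi$ lands in the affine chart $U$ of $W_{n,k}$. This is precisely the map already anticipated in the paragraph preceding the lemma.

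To verify $H_{n,k}$-invariance, I use that $\gcd(n,k) = 1$ so $k^{-1} \bmod n$ exists, and the generator $h = \epsilon_0 \epsilon_1^{-k^{-1} \bmod n}$ of $H_{n,k}$ acts by $h(x,y) = (\zeta x, \zeta^{-k^{-1}} y)$. A direct computation gives
\[\psi(h(x,y)) = \bigl((\zeta x)^n,\ \zeta x \cdot \zeta^{-k k^{-1}} y^k\bigr) = \bigl(x^n,\ \zeta \cdot \zeta^{-1} xy^k\bigr) = \psi(x,y).\]
Next I would observe that $h$ has order exactly $n$ in $\mathrm{Aut}(X_n)$, so the quotient map $X_n \to X_n/H_{n,k}$ has degree $n$. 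A direct fiber count shows $\psi\colon U_F \to U$ also has degree $n$: above a generic $(u_0, v_0) \in U$, each of the $n$ choices of $y$ satisfying $y^n = 1-u_0$ uniquely determines $x = v_0/y^k$, and then $x^n = v_0^n/(1-u_0)^k = u_0$ holds automatically. Consequently the induced factorization $X_n/H_{n,k} \to W_{n,k}$ has degree $1$; being a morphism between smooth projective curves, it is an isomorphism.

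The fibers can then be read off from the affine description. Over $\eta_0 = (0,0)$, the condition $x^n = 0$ forces $x = 0$, and then $y^n = 1$ yields the $n$ points $(0, \zeta^i)$ for $0 \leq i \leq n-1$. Over $\eta_1 = (1,0)$, the condition $x^n = 1$ gives $x \neq 0$, so $xy^k = 0$ forces $y = 0$, yielding the $n$ points $(\zeta^i, 0)$. Since $\psi$ restricts to $U_F \to U$ and $W \setminus U = \{\eta_\infty\}$, the preimage $\psi^{-1}(\eta_\infty)$ under the extended map $X_n \to W$ equals $X_n \setminus U_F = Z_F$, a set of $n$ points. The main subtlety is bookkeeping the exponent $-k^{-1}$ in $h$: the hypothesis $\gcd(n,k) = 1$ is exactly what enables the cancellation $\zeta^{-k k^{-1}} = \zeta^{-1}$ and thereby pins down the specific subgroup $H_{n,k}$ as the kernel of $\psi$.
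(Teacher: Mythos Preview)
Your proof is correct and follows essentially the same approach as the paper: exhibit the explicit map $(x,y)\mapsto(x^n,xy^k)$, check that $u=x^n$ and $v=xy^k$ are fixed by $h$, compare degrees, and read off the fibers. You have simply supplied more detail than the paper's terse version (which phrases the same construction as a function-field inclusion and leaves the fiber computation to the reader).
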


\begin{proof}
There is a well-defined inclusion from the function field of $W_{n,k}$ to the function field of $X_n$ 
determined by $u \mapsto x^n$ and $v \mapsto xy^k$.
This inclusion has degree $n$.  The first claim follows since $u$ and $v$ are fixed by $h$.
The claims about the fibers follow by calculation.
\end{proof}

\begin{remark} \label{RRtdbu}
Here are the reasons that proving Theorem~\ref{Tmaintheorem} with a top-down approach would be 
more complicated. 

First, the combinatorial description of the loop in Section~\ref{S3} is substantially 
easier than the description of $n$ loops in \cite{DPWgreen}. 
This is because the cover $\phi\colon W \to {\mathbb P}^1$ has degree $n$, rather than 
$n^2 = \mathrm{deg}(X_n \to \mathbb{P}^1)$, and also because
there is one point $\eta_\infty$ of $W$ above $\infty$ rather than the $n$ points of $Z_F$.
The theoretical description of the boundary of a simple closed disk containing $\eta_\infty$ 
is easier than that for the boundary of a simple closed disk in $X_n$ containing the $n$ points of $Z_F$.

Second, in Section~\ref{Smodular}, the homology group $\rH_1(W)$ is a subspace of a free module of rank one
over $\ZZ[\mu_n]$, while $\rH_1(X_n)$ is a subquotient of a free module of rank one 
over the more complicated group ring $\ZZ[\mu_n \times \mu_n]$.  
The formula for $\Delta_W$ in \eqref{Edelta} is easier than for the Fermat curve where 
there is a non-trivial homomorphism $\wedge^2 \rH_1(U_F) \to \wedge^2 \rH_1(X_n)$. 

Third, the next lemma shows that the chosen basis elements of $\rH_1(X_n)$ 
have slightly complicated images in terms of the chosen basis elements of $\rH_1(W)$. 
For these reasons, the formula in Theorem~\ref{Tmaintheorem} is easier to 
write down and prove using a direct approach.
\end{remark}

Recall from \cite[Section~2.3]{DPWgreen}, the definitions of the paths $\{e_{i,j}\}_{0 \leq i,j \leq n-1}$ in $U_F$ and the loops
\[E_{i,j} = e_{0,0} \cdot  (e_{0,j})^{-1} \cdot e_{i,j} \cdot (e_{i,0})^{-1}.\]
By \cite[Lemma~4.1]{DPWgreen}, the images of $\{[E_{i,j}]\}_{1\le i,j\le n-1}$ form a basis for $\rH_1(U_F)$.

\begin{lemma} Under the map $\pi_1(U_F)\to\pi_1(U)$ induced by the map $U_F\to U$:
\begin{enumerate}
\item The image of $e_{i,j}$ in $\pi_1(U, \{\eta_0, \eta_1\})$ is $e_{i+jk}$.
\item The image of $E_{i,j}$ in $\pi_1(U)$ is $e_0 \cdot e_{jk}^{-1} \cdot e_{i+jk} \cdot e_i^{-1}$, which starts and ends at $\eta_0$.
\item The adapted loop $e_{jk}^{-1} \cdot e_{i+jk} \cdot e_i^{-1} \cdot e_0$, which starts and ends at $\eta_1$, equals 
$E_{jk} \cdot E_{i+jk}^{-1} \cdot E_i$.
\end{enumerate}
\end{lemma}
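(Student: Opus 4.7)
The plan is to deduce the three statements by a direct, equivariant computation. The key input, already recorded earlier in the excerpt, is that the covering $U_F\to U$, $(x,y)\mapsto(x^n,xy^k)=(u,v)$, carries $\beta$ to $\beta_W$. First, I would verify the equivariance of this map on the relevant automorphism groups: if $(x,y)\in U_F$ projects to $(u,v)\in U$, then
\begin{equation*}
(\zeta^i x,\zeta^j y)\ \mapsto\ \bigl((\zeta^i x)^n,\,(\zeta^i x)(\zeta^j y)^k\bigr)=(u,\zeta^{i+jk}v),
\end{equation*}
so the automorphism $\epsilon_0^i\epsilon_1^j$ of $X_n$ descends to $\epsilon^{i+jk}$ on $W$. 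Combined with the relation $\beta\mapsto\beta_W$, this yields
\begin{equation*}
e_{i,j}=\epsilon_0^i\epsilon_1^j\beta\ \longmapsto\ \epsilon^{i+jk}\beta_W=e_{i+jk},
\end{equation*}
where the subscript is reduced modulo $n$. This proves part (1).

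For part (2), I would apply part (1) termwise to the definition $E_{i,j}=e_{0,0}\cdot(e_{0,j})^{-1}\cdot e_{i,j}\cdot(e_{i,0})^{-1}$, which immediately yields the image $e_0\cdot e_{jk}^{-1}\cdot e_{i+jk}\cdot e_i^{-1}$. Since each $e_\ell$ runs from $\eta_0$ to $\eta_1$, the concatenation is a loop at $\eta_0$; this is consistent with $E_{i,j}$ being based at $(0,1)\in U_F$, which projects to $\eta_0$. For part (3), I would substitute $E_\ell=e_\ell^{-1}\cdot e_0$ into $E_{jk}\cdot E_{i+jk}^{-1}\cdot E_i$ and simplify using the cancellation $e_0\cdot e_0^{-1}=\mathrm{id}$:
\begin{equation*}
E_{jk}\cdot E_{i+jk}^{-1}\cdot E_i=(e_{jk}^{-1}\cdot e_0)\cdot(e_0^{-1}\cdot e_{i+jk})\cdot(e_i^{-1}\cdot e_0)=e_{jk}^{-1}\cdot e_{i+jk}\cdot e_i^{-1}\cdot e_0,
\end{equation*}
which is precisely the adapted loop in the statement, based at $\eta_1$.

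The whole argument is essentially a bookkeeping exercise, and I do not anticipate any genuine obstacle. What does require care is that the endpoints of the composed paths match at each stage, and that subscripts are read modulo $n$; the hypothesis $\gcd(n,k(k+1))=1$ guarantees in particular that $\gcd(n,k)=1$, so as $j$ ranges over $\{0,1,\dots,n-1\}$ the values of $jk\bmod n$ form a permutation of the residues, and no information is lost by this reduction.
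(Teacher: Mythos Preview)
Your proof is correct and follows essentially the same approach as the paper, which simply records that the lemma ``follows from the equalities $u=x^n$ and $v=xy^k$''; you have spelled out the equivariance computation and the termwise substitutions that this terse sentence abbreviates. The closing remark about $jk\bmod n$ permuting residues is not needed for the statement as written, but it does no harm.
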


\begin{proof}
This follows from the equalities $u=x^n$ and $v=xy^k$ in the proof of Lemma~\ref{Lquotient}.
\end{proof}

\section{The classifying element of the Belyi curve} \label{S3}

We continue to study the curve $W=W_{n,k}$ with affine equation $v^n=u(1-u)^k$.
Recall that $T = \prod_{i=1}^g [a_i, b_i]=c_\infty^{-1}$ is in the homotopy class of the boundary of a disk in $W$ 
that contains the point $\eta_\infty$.
In Proposition~\ref{Pwhichloop}, we find a loop homotopic to $T$ written 
in terms of the elements $E_{i}$ in $\pi_1(U)$.
We then analyze the ordering of the loops $E_{i}$ in $T$ combinatorially. 
This enables us to find an explicit formula for $\Delta \in \rH_1(U) \wedge \rH_1(U)$, 
using a basis on which we have some information about the action of the absolute Galois group,
see Theorem~\ref{Tmaintheorem}.

\subsection{Gluing sheets of an unramified cover}

Recall that $\phi \colon W \to {\mathbb P}^1$ is the $\mu_n$-Galois cover given by $(u,v) \mapsto u$.
The cover $\phi$ is branched at $\{u=0, 1, \infty\}$ and is ramified at 
$\{\eta_0=(0,0), \eta_1=(1,0), \eta_\infty\}$. 
In this section, we remove some paths in $W$ and ${\mathbb P}^1$ in order to work with an unramified cover.

Given the equation $v^n=u(1-u)^k$, 
the inertia type of $\phi$ is the $3$-tuple $(1, k, -(1+k))$.
This means that the canonical generators of inertia at $\eta_0, \eta_1,$ and $\eta_\infty$ 
are $\zeta^1, \zeta^k,$ and $\zeta^{-1-k}$, respectively.
In other words, the chosen generator $\epsilon$ of the Galois group of $\phi$ acts on a uniformizer at 
each ramification point by this root of unity, respectively.

We make a slit cut along the positive real line
in $\mathbb{P}^1(\CC)$ from $u=1$ to $u=0$ and another from $u=1$ to $u=\infty$.
We choose a base point $\underline{u}_1$ close to $u=1$ and in the lower half plane; a technical term for
this is a tangential base point at $u=1$.  We also make a short slit cut $\underline{t}$ from $u=\underline{u}_1$ to $u=1$.

Let $P^\circ$ be the complement of these three slit cuts in $\mathbb{P}^1(\CC)$.
Let $W^\circ$ be the complement of the $3n$ paths in $W$ that lie above these three slit cuts.

\begin{lemma}
The restriction $\phi\colon W^\circ \to P^\circ$ is unramified. 
\end{lemma}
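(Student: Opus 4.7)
The plan is to reduce the statement to the fact that the Galois cover $\phi$ is étale outside its branch locus $\{u=0,1,\infty\}$. Since each of the three branch points is an endpoint of one of the chosen slit cuts, $P^\circ$ is by construction contained in $\mathbb{P}^1(\mathbb{C})\setminus\{0,1,\infty\}$. The only nontrivial step is therefore to verify the equality $W^\circ=\phi^{-1}(P^\circ)$; once this is established, $\phi|_{W^\circ}$ is just a restriction of an unramified map, hence unramified.

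To establish $W^\circ=\phi^{-1}(P^\circ)$, I would check that each slit cut $\sigma\subset\mathbb{P}^1(\mathbb{C})$ has preimage consisting of exactly $n$ disjoint paths in $W$, so that the three slit cuts together account for the $3n$ paths removed in the definition of $W^\circ$. The interior of $\sigma$ lies in $\mathbb{P}^1(\mathbb{C})\setminus\{0,1,\infty\}$, is simply connected, and hence lifts through the degree-$n$ unramified cover $\phi$ to $n$ disjoint open arcs in $W$. The endpoints of $\sigma$ require a small case analysis: when the endpoint is one of $0,1,\infty$, total ramification (already established in the excerpt under the hypothesis $\gcd(n,k(k+1))=1$) ensures that all $n$ open arcs close up at the unique preimage $\eta_0$, $\eta_1$, or $\eta_\infty$; when the endpoint is the non-branch point $\underline{u}_1$, the $n$ arcs close up at the $n$ distinct preimages of $\underline{u}_1$. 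In either situation, the preimage of $\sigma$ is the disjoint union of $n$ closed paths.

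The main obstacle, modest as it is, is this preimage bookkeeping at the branch endpoints: one has to know that total ramification forces the $n$ arcs over the interior of $\sigma$ to close up at a single point without splitting further, so that the preimage is genuinely $n$ paths and not a more complicated graph. This is straightforward once the inertia structure of the cover is understood, and after it is settled the lemma follows immediately from the étale property of $\phi$ on $\mathbb{P}^1(\mathbb{C})\setminus\{0,1,\infty\}$.
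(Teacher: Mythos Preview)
Your argument is correct and is more direct than the paper's for the statement as literally written. You observe that $\phi$ is \'etale over $\mathbb{P}^1(\mathbb{C})\setminus\{0,1,\infty\}$, that $P^\circ$ lies inside this locus, and that $W^\circ=\phi^{-1}(P^\circ)$; the last equality is essentially the definition of $W^\circ$, though your bookkeeping usefully confirms the implicit claim that the preimage of each slit cut consists of exactly $n$ paths.

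The paper proceeds differently: it computes that the local monodromies at $0,1,\infty$ multiply to $\zeta^{1}\zeta^{k}\zeta^{-(k+1)}=1$ and concludes that the monodromy action of $\pi_1(P^\circ)$ on the fibers is trivial. This argument is really establishing the stronger fact that $W^\circ\to P^\circ$ is a \emph{trivial} cover, which is exactly what is used in the sentence following the lemma (``Thus $W^\circ$ is a disjoint union of $n$ connected components''). Your route gives unramifiedness only; the decomposition into $n$ sheets then follows separately from the observation that $P^\circ$, being the sphere minus a contractible tree of slits, is simply connected. Both approaches reach the needed conclusion, but the paper's monodromy computation packages the sheet decomposition into the proof of the lemma, whereas yours proves precisely what the lemma states and leaves the splitting to the topology of $P^\circ$.
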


\begin{proof}
The monodromy around $u=0,1,\infty$ is multiplication by 
$\zeta^1, \zeta^k,$ and $\zeta^{-(k+1)}$, respectively.  
So a loop going around all 3 of these points is multiplication by $1$.
Therefore, the monodromy action of $\pi_1(P^\circ)$ on $W^\circ$ is trivial, 
which proves the claim.  
\end{proof}

Thus $W^\circ$ is a disjoint union of $n$ connected components, which are called sheets.
We need to label the regions of $\PP$ near the slit cuts and the edges along the boundary of $W^\circ$.  
It might be helpful to look at Figure~\ref{F113} for reference.

For the regions of $\PP$: let $N$ denote the region of the upper half plane which is close to the positive real axis;
let $E$ denote the region of the lower half plane which is close to the values $u \geq 1$ on the real axis; and
let $S$ denote the region of the lower half plane which is close to the values $0 \leq u \leq 1$ on the real axis.

For the edges along the boundary of $W^\circ$, we start by labeling the unique 
edge $\tau$ of $W^\circ$ having the following property:
it is on the path $\beta_W$; and it is on the right hand-side as one travels from $\eta_1$ to $\eta_0$, 
meaning that it lies above $N$ rather than $S$.  

Let $0 \leq i < n$.  The action of $\epsilon^i$ on $W^\circ$ allow us to label $n$ of the edges as $\epsilon^i \tau$; 
these are the edges that lie above the interval $[0,1]$ in $N$.
Furthermore, we label by $R_i$ the sheet of $W^\circ$ that contains $\epsilon^i \tau$.
In each sheet $R_i$: we label by $\epsilon^i \alpha$ the unique edge that lies above the ray 
$[1,\infty)$ in $N$; and we label by $\epsilon^i \xi$ the unique edge that lies above $\underline{t}$ in $S$.
This completes the labeling of one side of each of the $3n$ slit cuts in $W^\circ$.

The inertia type gives the information needed to glue the sheets together along the 
slit cuts to obtain the ramified cover $\phi$ of Riemann surfaces.

\begin{lemma} For $0 \leq i < n$:
\begin{enumerate}
\item The edge $\epsilon^i \tau$ on $R_i$ glues with the unique edge in $R_{i - 1 \bmod n}$ 
that lies above the interval $[0,1]$ in $S$. 
\item The edge $\epsilon^i \alpha$ on $R_i$ glues with the unique edge in $R_{i - (k+1) \bmod n}$ 
that lies above the interval $[1,\infty)$ in $E$. 
\item The edge $\epsilon^{i} \xi$ on $R_{i}$ glues with
the unique edge in $R_{i}$ that lies above $\underline{t}$ in $E$.
\end{enumerate}
\end{lemma}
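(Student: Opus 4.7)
The plan is to reduce each of the three gluings to a local analytic-continuation computation on sheet $R_0$ and then extend by $\mu_n$-equivariance. Since $\phi\colon W^\circ \to P^\circ$ is unramified and $P^\circ$ is simply connected (the complement in $\PP$ of the connected union of three slits meeting at $u=1$), the restricted covering is trivial: $W^\circ$ is the disjoint union of $n$ sheets $R_0,\ldots,R_{n-1}$, each mapping homeomorphically onto $P^\circ$, with $R_i=\epsilon^i R_0$. Because the slits and their $N/S/E$ labels are $\mu_n$-invariant, it suffices in each part to identify the sheet glued to $R_0$; the full statement then follows by applying $\epsilon^i$. For parts (1) and (2), I will use the branch of $v=(u(1-u)^k)^{1/n}$ determined by sheet $R_0$ (the one that is real positive along $\tau$): continuing $v$ across a given slit within $R_0$ alters its value by a root of unity determined by the monodromy, and the sheet glued on the opposite side is the unique $R_j$ whose $v$-value there matches the $R_0$-value at the source.

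For part (1), take a point $p$ with $u(p)=1/2$. In $P^\circ$, the unique homotopy class of path from a point just above the slit to one just below must wind counterclockwise around $u=0$, because $\underline{t}$ together with slit 2 blocks any passage around $u=1$ or $u=\infty$. Monodromy around $u=0$ (the canonical inertia $\zeta$ at $\eta_0$) multiplies $v$ by $\zeta$, so on sheet $R_j$ the $S$-side value just below $u=1/2$ equals $\zeta^{j+1}$ times the $R_0$ $N$-side value just above; matching real-positive targets forces $j\equiv -1\pmod{n}$, as claimed.

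Part (2) is analogous, with $p$ in the interior of slit 2 (say $u(p)=2$). The only admissible homotopy class of path from the $N$-side to the $E$-side winds around $u=\infty$, since $\underline{t}$ blocks the return from $S$ to $E$ after a detour around $u=0$. Working in $w=1/u$, this path makes almost a full clockwise loop around $w=0$ (any CCW option would cross the positive real $w$-axis, which is the union of the images of slits 1 and 2); tracking the continuous lifts, $\arg(u)$ and $\arg(1-u)$ each change by $+2\pi$ along it, so $n\,\arg(v)=\arg(u)+k\,\arg(1-u)$ changes by $2\pi(k+1)$ and $v$ is multiplied by $\zeta^{k+1}$. The same matching as before forces $j\equiv -(k+1)\pmod{n}$.

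Part (3) is purely topological: the interior of $\underline{t}$ lies in the unramified locus of $\phi$, so a small neighborhood of any interior lift in $W$ is a disk, cut by $\phi^{-1}(\underline{t})$ into two halves; both halves remain in the same sheet of $W^\circ$, because $P^\circ$ already connects its $S$- and $E$-regions by a path through the upper half plane that avoids $\underline{t}$. The main obstacle throughout is the bookkeeping of orientations: confirming that each forced homotopy class winds around the claimed branch point in the claimed direction, and that the resulting monodromy sign agrees with the convention that identifies $\epsilon$ with $\zeta$ on a local uniformizer at $\eta_0$.
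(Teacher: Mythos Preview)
Your proof is correct and takes essentially the same approach as the paper: both determine the gluing by computing the monodromy of the cover around each branch point. The only difference is one of exposition—the paper's proof invokes the pre-stated inertia type $(1,k,-(k+1))$ directly (a counterclockwise loop around $\eta_0$, $\eta_\infty$, or a non-branch point tells you the sheet change), whereas you re-derive the same monodromy by explicit analytic continuation of $v=(u(1-u)^k)^{1/n}$ along a chosen representative path in the simply connected region $P^\circ$ and then invoke $\mu_n$-equivariance; the content is identical.
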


\begin{proof}
\begin{enumerate}
\item Imagine a simple closed loop around $\eta_0$ that crosses the edge $\epsilon^i \tau$; 
as one travels around this loop in a counterclockwise direction, the fact that the generator of inertia at $\eta_0$ is 
$\zeta^1$ implies that one passes from the sheet $R_{i - 1 \bmod n}$ to the sheet $R_i$.
Thus the edge $\epsilon^i \tau$ on $R_i$ must be glued with the unique edge in $R_{i - 1 \bmod n}$ 
that lies above the interval $[0,1]$ in $S$. 

\item Imagine a simple closed loop around $\eta_\infty$ that crosses the edge $\epsilon^i \alpha$; 
as one travels around this loop in a counterclockwise direction, the fact that the generator of inertia at $\eta_\infty$ is 
$\zeta^{-(k+1)}$ implies that one passes from the sheet $R_i$ to the sheet $R_{i - (k+1) \bmod n}$.
Thus the edge $\epsilon^i \alpha$ on $R_i$ must be glued with the unique edge in $R_{i - (k+1) \bmod n}$ 
that lies above the interval $[1,\infty)$ in $E$. 

\item
Imagine a simple closed loop around a lift of $\underline{t}$ that crosses the edge $\epsilon^i \xi$; 
as one travels around this loop in a counterclockwise direction, one should stay on the same sheet 
since $\underline{t}$ is not a branch point. 
Thus the edge $\epsilon^{i} \xi$ on $R_{i}$ must be glued to the unique edge in $R_i$
that lies above the interval $\underbar{t}$ in $E$. 
\end{enumerate}
\end{proof}

\subsection{Lifting of a loop}

Let $\mathbb{H}^+$ be the upper half plane and $\mathbb{H}^-$ be the lower half plane.
Let $\tilde{u}_1$ be the lift of the base point $\underline{u}_1$ which is on $\xi$ in $R_0$.
In this section, all loops in $\mathbb{P}^1(\CC)$ (resp.\ $W(\CC)$) have 
base point $\underline{u}_1$ (resp.\ $\tilde{u}_1$).

In $\mathbb{P}^1(\CC)$, consider a counterclockwise simple closed loop $Z_\circ$ around $\infty$.
It is homotopic to a clockwise simple closed loop $Z$ 
that first crosses from $\mathbb{H}^-$ to $\mathbb{H}^+$ at some point in ${\mathbb R}^{u <0}$ 
and then crosses from $\mathbb{H}^+$ to $\mathbb{H}^-$ at some point in ${\mathbb R}^{u>1}$.  
Without loss of generality, we can suppose that this last crossing occurs at an arbitrarily large value of $u$.

Let $\tilde{Z}_\circ$ be a lift of $Z_\circ$ to $W$.  
Let $\tilde{Z}$ be a lift of $Z$ to $W$. 

Our goal now is to describe the loop $\tilde{Z}$ in terms of the edges 
$\epsilon^i \tau$, and then in terms of the loops $E_i$, for $0 \leq i < n$.
Recall that $e_i = \epsilon^i \tau$ and $E_i=\epsilon^i\tau\cdot\tau^{-1}$. 
For $0 \leq j \leq n-1$, let 
\begin{equation} \label{Eloopstart}
L_j \colonequals \epsilon^{j(n-k-1)+1}\tau\cdot (\epsilon^{j(n-k-1)}\tau)^{-1} = E_{j(n-k-1)+1} \cdot E_{j(n-k-1)}^{-1},
\end{equation}
where the second equality follows from \eqref{Elittleebige}.
Let \[L\colonequals L_0 \cdot L_1 \cdot \cdots\cdot L_{n-1}.\]  
We view $L$ as a word in $\{E_i, E_i^{-1}\}_{0 \leq i <n}$, including the elements $E_0, E_0^{-1}$ 
as placeholders even though they are trivial in homology. 

\begin{proposition} \label{Pwhichloop}
The loop $c_\infty$ is homotopic to $L$.
\end{proposition}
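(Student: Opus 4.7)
The plan is to realize $c_\infty$ as the lift of $Z^n$ to $W$, and to deform this lift inside $U$ to the prescribed word $L=L_0\cdot L_1\cdots L_{n-1}$.

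Since $\phi\colon W\to\mathbb{P}^1$ is totally ramified of degree $n$ at $\eta_\infty$, the lift $\widetilde{Z^n}$ of $Z^n$ starting at $\tilde u_1$ is a loop in $W$; after conjugating by the short $\xi$-path from $\eta_1$ to $\tilde u_1$ on sheet $R_0$, it is homotopic in $U$ to $c_\infty$ (up to orientation). I would first trace this lift one lap at a time using the edge gluing rules established in the previous subsection. Within a single lap, $Z$ crosses only the slit $[1,\infty)$ once (from $N$ to $E$), so the gluing of the $\alpha$-edges dictates a unique sheet change from $R_i$ to $R_{i-(k+1)}$ per lap. Hence the $j$-th lap runs from sheet $R_{a_j}$ to sheet $R_{a_{j+1}}$ with $a_j\equiv j(n-k-1)\equiv -j(k+1)\pmod n$; because $\gcd(n,k+1)=1$, the $n$ laps close up into a loop, matching the monodromy $\zeta^{-(k+1)}$ at $\eta_\infty$.

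The key step is to homotope each lap inside $U$ into the corresponding factor $L_j=e_{a_j+1}^{-1}\cdot e_{a_j}$. I would use the factorization of $Z$ in $\pi_1(\mathbb P^1\setminus\{0,1,\infty\})$ as a small clockwise loop around $u=1$ followed by a small clockwise loop around $u=0$; the two loops contribute sheet shifts $-k$ and $-1$, totaling the $-(k+1)$ shift of one lap. Lifting each factor to $W$ and inserting base-point adjustments along $\xi$ so that everything is based at $\eta_1$: the lift of the clockwise loop around $u=1$ becomes a loop at $\eta_1$ in $U$, which is trivial in $\pi_1(U)$ because $\eta_1$ is a regular point of $U$; the lift of the clockwise loop around $u=0$, stretched along the slit $[0,1]$ and re-expressed using the $\tau$-edge gluing $R_i/N\leftrightarrow R_{i-1}/S$, becomes exactly $e_{a_j+1}^{-1}\cdot e_{a_j}$. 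Concatenating the $n$ laps and telescoping the intermediate $\xi$-paths yields $L$.

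The main technical obstacle is the combinatorial bookkeeping of sheet indices and slit sides throughout the homotopy: the exact indices $a_j+1$ and $a_j$ (rather than other nearby values) appear in $L_j$ because of the specific orientations and side choices inherited from the gluing rules, and tracking these through the stretching of the small clockwise loop around $\eta_0$ and through the telescoping of the $\xi$-adjustments requires care. The hypotheses that $\eta_0,\eta_1\in U$ are regular points and that the inertia type is $(1,k,-(k+1))$ are both essential for the bookkeeping to yield the precise formula in \eqref{Eloopstart}.
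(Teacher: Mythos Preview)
Your overall strategy---lift $Z^n$ lap by lap, track sheet changes via the gluing rules, and telescope the $\xi$-conjugations---is the same as the paper's. But the mechanism you use to reduce one lap to an expression in $\tau$-edges is genuinely different from the paper's, and your version has a bookkeeping gap that is not merely cosmetic.

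The paper does \emph{not} factor $Z$ in $\pi_1(\mathbb P^1\setminus\{0,1,\infty\})$. Instead it deforms the lift of one lap of $Z$ in $W$ so that it hugs the boundaries of \emph{all three} slits, obtaining on the first lap the concrete path
\[
\xi^{-1}\cdot \epsilon\tau\cdot\tau^{-1}\cdot\alpha\cdot\alpha^{-1}\cdot\epsilon^{n-k-1}\xi.
\]
The simplification is then purely formal: the two $\alpha$-pieces are literally the same edge in $W$ traversed in opposite directions (by gluing rule (2)), so $\alpha\cdot\alpha^{-1}$ is the constant path. No appeal to ``$\eta_1\in U$'' is needed. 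What remains is $\xi^{-1}\cdot L_0\cdot\epsilon^{n-k-1}\xi$, and the $\xi$'s telescope across laps. This gives the indices $a_j+1$ and $a_j$ directly, with no ambiguity.

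Your route instead writes $Z\sim(\text{clockwise around }1)\cdot(\text{clockwise around }0)$ and discards the first factor because $\eta_1\in U$. The problem is that with this order the loop around $u=1$ shifts the sheet by $-k$ \emph{before} the loop around $u=0$ is lifted, so the second factor starts on sheet $R_{a_j-k}$, not $R_{a_j}$. Tracing the lift of a clockwise loop around $u=0$ from sheet $R_m$ along the $S$-side of $[0,1]$ (whose edge on $R_m$ is $\epsilon^{m+1}\tau$) and back gives $\epsilon^{m+1}\tau\cdot(\epsilon^m\tau)^{-1}$; with $m=a_j-k$ this is $e_{a_j-k+1}^{-1}\cdot e_{a_j-k}$, not the claimed $e_{a_j+1}^{-1}\cdot e_{a_j}$. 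The resulting product over $j$ is then only a cyclic permutation---hence a conjugate---of $L$, not $L$ itself. (Reversing the order of your two factors, so that the loop around $0$ comes first, would give the correct indices; but you have not justified which order actually realizes $Z$ with the chosen basepoint $\underline u_1$, and this is exactly the ``bookkeeping'' you flag as the main obstacle.) The paper's slit-tracing deformation sidesteps this issue entirely.
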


\begin{proof}
The loop $c_\infty$ is homotopic to $\tilde{Z}$, because $\tilde{Z}$ is homotopic to $\tilde{Z}_\circ$, which is a counterclockwise simple closed loop around $\eta_\infty$.
So it suffices to prove the formula for $\tilde{Z}$.

The loop $\tilde{Z}$ in $W$ covers the loop $Z$ in $\mathbb{P}^1$ exactly $n$ times.  This is because it is homotopic 
to the simple closed loop $\tilde{Z}_\circ$ around the point $\eta_\infty$, which is a ramification point for $\phi$ with 
ramification degree $n$.  Each revolution begins above $S$, then goes above $N$, then switches sheets and goes above $E$.

The loop $\tilde{Z}$ starts on the sheet $R_0$.
Then it crosses the edge $\alpha$ onto the sheet $R_{n-(k+1)}$, where it swings by the point 
$\epsilon_{n-(k+1)} \tilde{u}_1$.  That completes one of the $n$ revolutions.  
The next revolution is similar but starts on the sheet $R_{n-(k+1)}$.  
So the subindex on each path increases by $n-(k+1)$ after each revolution.

The first revolution is homotopic to a path that traces in a clockwise direction along the outside of the slits, 
going near the following points, in this order:
\[\tilde{u}_1 \mapsto \eta_1 \mapsto \eta_0 \mapsto \eta_1 \mapsto \eta_\infty \mapsto \eta_1 \mapsto 
\epsilon^{n-(k+1)} \tilde{u}_1.\]
This path is the composition of the following paths, written from left to right, 
with the first four on the sheet $R_0$ and the last two on 
the sheet $R_{n-(k+1)}$:
\[\xi^{-1} \cdot \epsilon \tau \cdot \tau^{-1} \cdot \alpha \cdot \alpha^{-1} \cdot \epsilon^{n-(k+1)} \xi.\] 

The paths $\alpha$ and $\alpha^{-1}$ cancel.  
Also the last path $\epsilon^{n-(k+1)} \xi$ on the first revolution cancels with the first path on the next revolution,
and the first path $\xi^{-1}$ on the first revolution cancels with the last path on the last revolution;
so these paths can be ignored, leaving only $L_0$. 

Thus $\tilde{Z}$ is homotopic to $L_0 \cdot \epsilon^{n-(k+1)} L_0 \cdot \cdots \cdot \epsilon^{(n-1)(n-(k+1))} L_0$.
By  \eqref{Eloopstart}, the equation for $L_{j+1}$ is $\epsilon^{n-(k+1)} L_j$.  
Thus $\tilde{Z}$ is homotopic to $L_0 \cdot L_1 \cdot \cdots \cdot L_{n-1}$. 
\end{proof}

\subsection{Combinatorial analysis of loop} \label{Sformulaforloop}

We need a combinatorial analysis of the ordering of the edges in the loop $L$.  We say that a loop $E_j$ (or $E_j^{-1}$) is between $E_i^{-1}$ and $E_i$ if it is written between them after cyclically permuting the loops so that $E_i^{-1}$ is the leftmost loop in $L$.

Recall that $\mathrm{gcd}(n,k+1)=1$.
Let $c\in\{1,\ldots, n-1\}$ be such that $c\equiv (k+1)^{-1} \bmod n$.  
Note that $c(n-k-1)\equiv n-1\bmod n$. 

\begin{proposition} \label{Pbetween}
Let $0 \leq i < n$. The loops between $E_i^{-1}$ and $E_i$ in $L$ are 
\[\left\{E_{i+ j(n-k-1) + 1}, E_{i+ j(n-k-1)}^{-1}\right\}_{1 \leq j \leq c-1}.\]
\end{proposition}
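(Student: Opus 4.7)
The plan is to compute directly the positions of $E_i^{-1}$ and $E_i$ inside the length-$2n$ word $L = L_0 \cdot L_1 \cdot \cdots \cdot L_{n-1}$, and then read off the letters that appear strictly between them after the prescribed cyclic permutation. The formula $L_j = E_{j(n-k-1)+1} \cdot E_{j(n-k-1)}^{-1}$ shows that each of the indices $0, 1, \ldots, n-1$ appears exactly once as a subscript of an unbarred letter and exactly once as a subscript of an inverse letter, so in order to locate $E_i$ and $E_i^{-1}$ I need only solve two congruences.

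First, since $\gcd(n, k+1) = 1$ and $c \equiv (k+1)^{-1} \bmod n$, we have $(n-k-1)^{-1} \equiv -c \bmod n$. Solving $j(n-k-1) \equiv i \bmod n$ shows that $E_i^{-1}$ appears exactly once, as the \emph{second} letter of the block $L_{-ci \bmod n}$; solving $j(n-k-1) + 1 \equiv i \bmod n$ shows that $E_i$ appears exactly once, as the \emph{first} letter of the block $L_{-c(i-1) \bmod n}$.

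Next, the position difference between these two blocks is $-c(i-1) - (-ci) \equiv c \bmod n$. After cyclically permuting $L$ so that the block containing $E_i^{-1}$ sits at the leftmost position, $E_i^{-1}$ is the rightmost letter of that block and $E_i$ is the leftmost letter of a block sitting $c$ positions to its right. Thus the letters strictly between $E_i^{-1}$ and $E_i$ are precisely the contents of the $c-1$ intermediate blocks $L_{-ci+1}, L_{-ci+2}, \ldots, L_{-ci+c-1}$, with indices understood mod $n$.

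Finally, I would substitute into the formula for each such block: using $c(k+1) \equiv 1 \bmod n$, we get $-ci(n-k-1) \equiv ci(k+1) \equiv i \bmod n$, so
\[
L_{-ci+r} = E_{i + r(n-k-1)+1} \cdot E_{i + r(n-k-1)}^{-1}
\]
for $r = 1, \ldots, c-1$. Setting $j = r$ yields exactly the claimed set. The argument is essentially a bookkeeping calculation; the only real point of care is keeping cyclic indices consistent and confirming that $E_i^{-1}$ really is the last letter of its block and $E_i$ really is the first letter of its block, so that nothing in those two boundary blocks accidentally falls between them.
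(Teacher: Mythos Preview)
Your proof is correct and follows essentially the same approach as the paper: both arguments locate $E_i^{-1}$ and $E_i$ within the block decomposition $L = L_0 \cdots L_{n-1}$ and read off the $c-1$ intervening blocks. The only cosmetic difference is that the paper invokes the shift symmetry $L_{j+1} = \epsilon^{n-k-1}L_j$ on subscripts to reduce to the case $i=0$ and then writes out $L_0,\ldots,L_c$ explicitly, whereas you carry out the modular arithmetic for general $i$ directly; both lead to the same identification $L_{-ci+r} = E_{i+r(n-k-1)+1}\cdot E_{i+r(n-k-1)}^{-1}$ for $1 \le r \le c-1$.
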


\begin{proof}
It suffices to prove the result when $i=0$ by symmetry.
When $i=0$, the claim is that the loops in $L$ between $E_0^{-1}$ and $E_0$ in $L$ are
$E_{j(n-k-1) + 1}$ and $E_{j(n-k-1)}^{-1}$ for $1 \leq j \leq c-1$.

To see this, consider the ordering of the loops in $L$:

sheet 0: $L_0 = \epsilon\tau\cdot\tau^{-1}=E_1 \cdot E_0^{-1}$;

sheet $n-k-1$: $L_1 = \epsilon^{(n-k-1)+1}\tau\cdot\left(\epsilon^{n-k-1}\tau\right)^{-1}
=E_{(n-k-1)+1} \cdot E_{n-k-1}^{-1}$;

sheet $2(n-k-1)$: $L_2=\epsilon^{2(n-k-1)+1}\tau\cdot\left(\epsilon^{2(n-k-1)}\tau\right)^{-1}
=E_{2(n-k-1)+1} \cdot E_{2(n-k-1)}^{-1}$;

and continuing on to sheet $c(n-k-1)$: $L_c = \tau\cdot\left(\epsilon^{n-1}\tau\right)^{-1}= E_0 \cdot E_{n-1}^{-1}$.

The loop $E_0$ occurs first on the sheet $c(n-k-1)$.
The result follows because the stated loops are the ones that occur in $L_1, \ldots, L_{c-1}$.
\end{proof}

\subsection{Main result}

\begin{theorem} \label{Tmaintheorem}
Let $n$ and $k$ be integers with $1 \leq k \leq n-2$ and $\mathrm{gcd}(n, k(k+1))=1$. 
Let $W$ be the smooth projective curve with affine equation $v^n=u(1-u)^k$.

Let $c$ be the integer such that $1 \leq c \leq n-1$ and $c$ is the multiplicative inverse of $k+1$ modulo $n$.
Then the classifying element $\Delta$ for $W$ is given by 
\[\Delta= \sum_{1 \leq I < J \leq n-1} c_{I,J} [E_I] \wedge [E_J],\]
where 
\[c_{I,J} = \begin{cases}
- 1 & \text{ if } J-I \equiv j(k+1) - 1 \bmod{n}, \text{ or} \\
+ 1  & \text{ if } J-I  \equiv j(k+1) \bmod{n}, 
\end{cases}\]
for some $j$ such that $1 \leq j \leq c-1$; and $c_{I,J}=0$ otherwise.
\end{theorem}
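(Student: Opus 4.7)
The plan is to combine Lemma~\ref{Lleeway} with Proposition~\ref{Pwhichloop}: since $c_\infty$ is homotopic to $L$, the element $T = c_\infty^{-1}$ is homotopic to $L^{-1}$, and Lemma~\ref{Lleeway} identifies $\Delta$ with the image of $L^{-1}$ in the quotient $[\pi_1(U)]_2/[\pi_1(U)]_3 \cong \rH_1(U) \wedge \rH_1(U)$. Since inversion negates this image, writing $\widetilde{L}$ for the image of $L$, it suffices to compute $\widetilde{L}$ and conclude $\Delta = -\widetilde{L}$. Because $E_0$ is trivial in $\pi_1(U)$, I first substitute $E_0 \equiv 1$ in $L$, obtaining a word in which each of $E_1, \ldots, E_{n-1}$ and each of their inverses appears exactly once.

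I then compute $\widetilde{L}$ by iteratively cancelling the pairs $\{E_I, E_I^{-1}\}$ in the order $I = 1, 2, \ldots, n-1$. At step $I$, Lemma~\ref{Lsimplify}(1) lets me cyclically rearrange the current word into the form $E_I^{-1} \cdot W_1^{(I)} \cdot E_I \cdot W_2^{(I)}$. Applying Lemma~\ref{Lsimplify}(2) with $\gamma = E_I$, $\alpha = W_1^{(I)}$, and $\beta = W_2^{(I)}$ shows that the image of the current word in $\rH_1(U) \wedge \rH_1(U)$ equals the image of the shorter word $W_1^{(I)} \cdot W_2^{(I)}$ plus $[W_1^{(I)}] \wedge [E_I]$, where $[W_1^{(I)}]$ denotes the image of $W_1^{(I)}$ in homology. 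Iterating until the empty word yields
\[
\widetilde{L} = \sum_{I=1}^{n-1} [W_1^{(I)}] \wedge [E_I].
\]
Because the pairs for indices $I' < I$ have already been cancelled by the time we reach step $I$, the word $W_1^{(I)}$ consists precisely of those letters in the original between-set for $I$ (as in Proposition~\ref{Pbetween}) whose subscript exceeds $I$. Hence, for $1 \leq I < J \leq n-1$, the coefficient of $[E_I] \wedge [E_J]$ in $\Delta = -\widetilde{L}$ is $+1$ when $E_J^{+1}$ appears in the between-set for $I$, $-1$ when $E_J^{-1}$ appears there, and $0$ when neither does.

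Finally, I translate the conditions on the between-set into the arithmetic form of the theorem. By Proposition~\ref{Pbetween}, $E_J^{+1}$ lies in the between-set for $I$ if and only if $J - I \equiv j(n-k-1) + 1 \pmod n$ for some $1 \leq j \leq c-1$, and $E_J^{-1}$ lies there if and only if $J - I \equiv j(n-k-1) \pmod n$. Using $n - k - 1 \equiv -(k+1) \pmod n$ together with the identity $c(k+1) \equiv 1 \pmod n$, the reindexing $j' = c - j$ converts these conditions into $J - I \equiv j'(k+1) \pmod n$ and $J - I \equiv j'(k+1) - 1 \pmod n$ respectively, with $j' \in \{1, \ldots, c-1\}$, which match the signs $c_{I,J}$ stated in the theorem. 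The main obstacle I anticipate is keeping track of signs carefully through the cyclic rearrangements, through the passage $\Delta = -\widetilde{L}$, and through the reindexing $j' = c - j$; the essential combinatorial content is already carried by Proposition~\ref{Pbetween}.
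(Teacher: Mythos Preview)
Your proof is correct and follows essentially the same route as the paper's: both invoke Proposition~\ref{Pwhichloop}, Lemma~\ref{Lleeway}, repeated use of Lemma~\ref{Lsimplify}, and Proposition~\ref{Pbetween}, then negate for $T=c_\infty^{-1}$. Your iterative cancellation of the pairs $\{E_I,E_I^{-1}\}$ makes the application of Lemma~\ref{Lsimplify} more explicit than the paper's terse ``if $E'$ lies between $E_i^{-1}$ and $E_i$, then $-[E_i]\wedge[E']$ appears,'' and your final reindexing $j'=c-j$ is equivalent to the paper's relabeling $I=i-j(k+1)$ (or $I=i-j(k+1)+1$), $J=i$.
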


\begin{proof}
By Proposition~\ref{Pwhichloop}, the loop $c_\infty$ is homotopic to $L$.
By Lemma~\ref{Lleeway}, we can determine the image of $L$ in $[\pi_1(U)]_2/[\pi_1(U)]_3$ 
by writing it as a product of commutators.
By applying Lemma~\ref{Lsimplify} repeatedly, this image
is determined by the ordering of the loops in $L$; specifically, if $E'$ lies between $E_i^{-1}$ and $E_i$, then 
$-[E_i] \wedge [E']$ appears in the image.
By Proposition~\ref{Pbetween}, the image contains the terms 
\[-[E_i]\wedge [E_{i+j(n-k-1)+1}]=-[E_i]\wedge [E_{i-j(k+1)+1}]=[E_{i-j(k+1)+1}] \wedge [E_i],\] and 
\[+[E_i]\wedge [E_{i+j(n-k-1)}]=[E_i]\wedge [E_{i-j(k+1)}]= - [E_{i-j(k+1)}] \wedge [E_i],\] for $1 \leq j \leq c-1$. 

Since $T=c_\infty^{-1}$, we negate the coefficients once more; (this is not crucial, since this only scales $\Delta$).  Thus $c_{I,J}=-1$ if $J-I= j(k+1)-1$ and $c_{I,J}=1$ if $J-I= j(k+1)$ for some $1 \leq j \leq c-1$, and 
$c_{I,J}=0$ otherwise.
\end{proof}

\begin{remark} 
The coefficient $c_{I,J}$ is non-zero if exactly one of $E_J$ and $E_J^{-1}$ is between $E_I$ and $E_I^{-1}$ in this loop, indicating that there is a non-trivial commutator involving these elements.  
Specifically, $c_{I,J} =1$ for 
the ordering $E_I,E_J , E_I^{-1} , E_J^{-1}$ and
$c_{I,J} =-1$ for
the ordering $E_I , E_J^{-1} , E_I^{-1} , E_J$. 
\end{remark}

\begin{remark}
If $\mathrm{gcd}(j, n)=1$, then it is possible to compare the classifying elements for the inertia types $(j, jk, -j(k+1))$ and 
$(1,k,-(k+1))$.  Specifically, when we replace $E_i$ by $E_{j i \bmod n}$ in the classifying element for the former, 
we obtain the classifying element of the latter.  For example, $\Delta_{5, (3,1,1)} = -[E_1] \wedge [E_2] - [E_2] \wedge [E_3] - [E_3] \wedge [E_4]$.
Replacing $E_i$ by $E_{3i \bmod 5}$, we obtain the same result as for $\Delta_{5,(1,2,2)}$.

It is also possible to compare the classifying elements after a permutation of the three elements of the inertia type, 
but this involves a more complicated linear transformation on the homology.
\end{remark}

\subsection{Examples when $n=5$} \label{Spictures}

\begin{figure}
\centering
\includegraphics[scale=0.078]{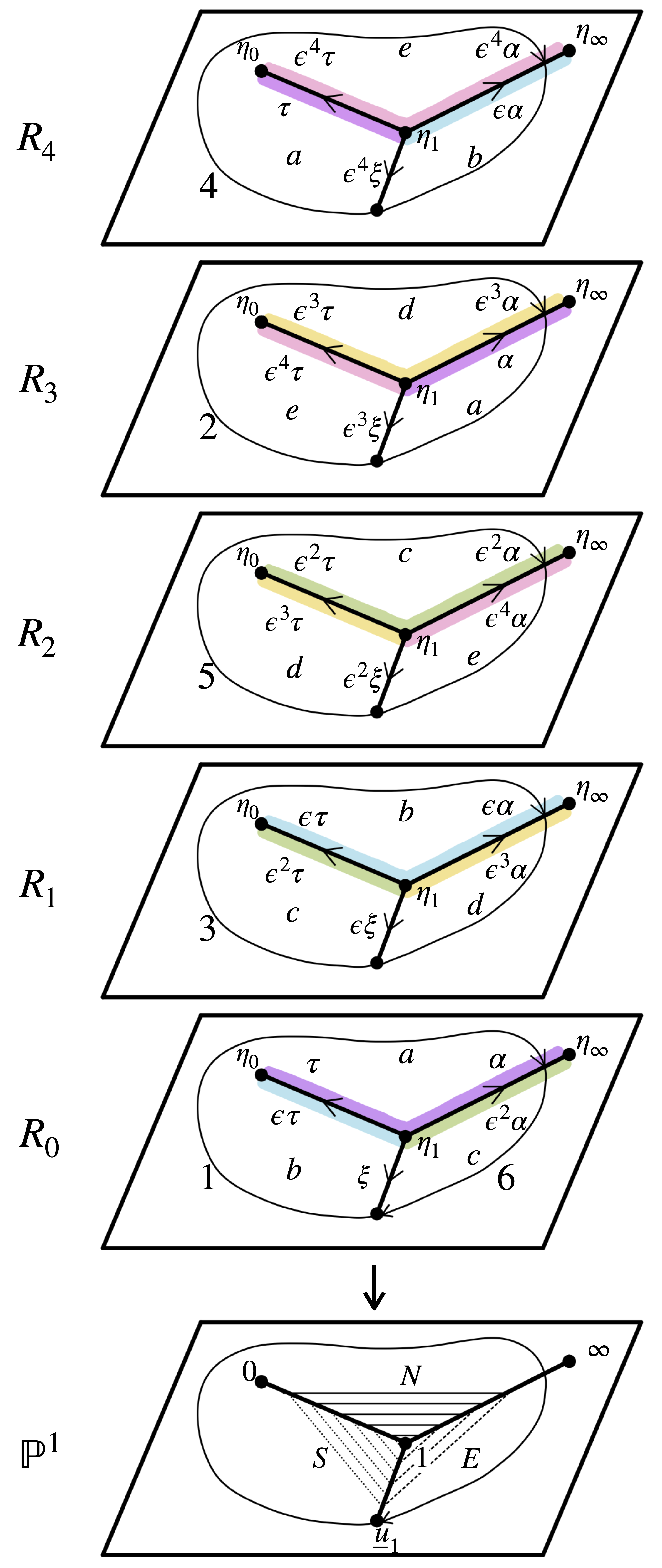} \hspace{0.6cm} \includegraphics[scale=0.078]{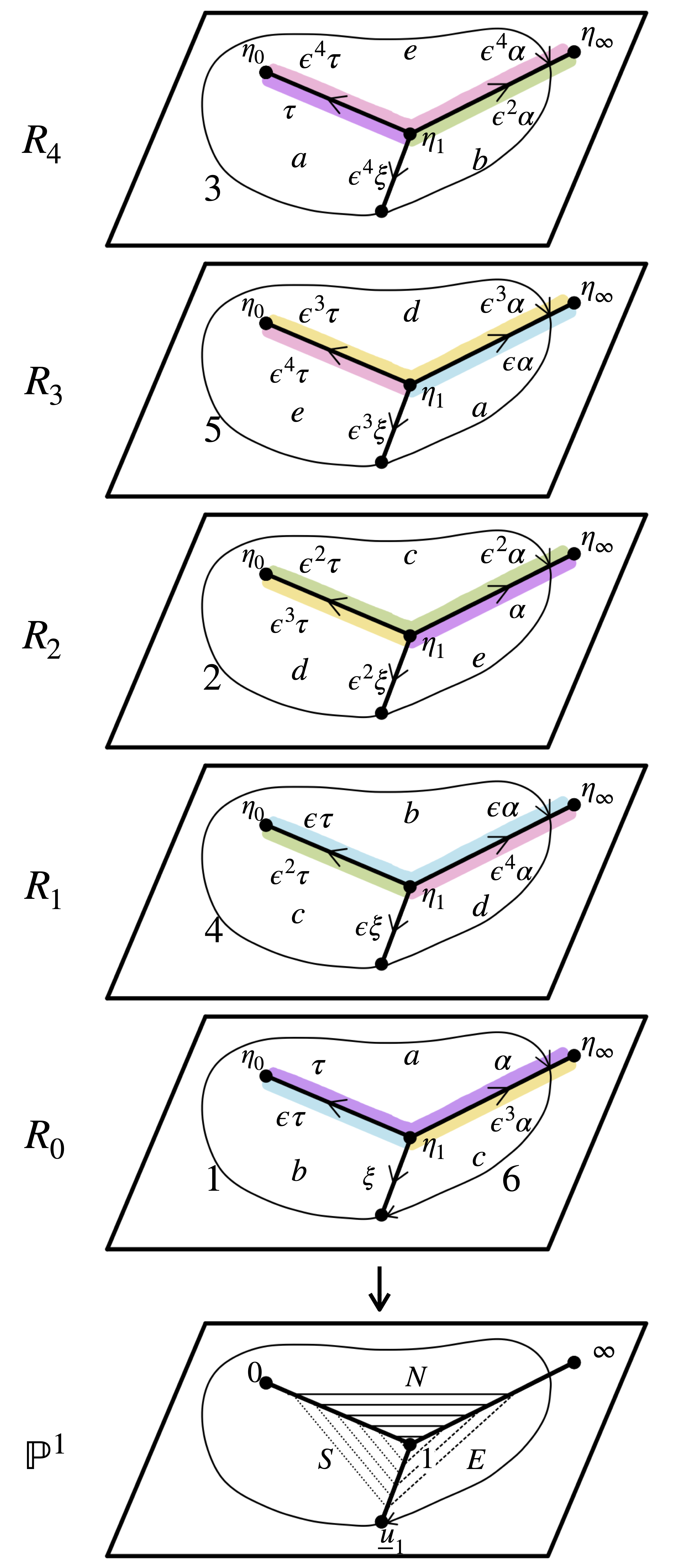}
\caption{Examples: $n=5$ and $k=1$ on the left and $k=2$ on the right.}
\label{F113}
\end{figure}

First, suppose that $k=1$.  
The left-hand side of Figure~\ref{F113} shows this example in detail.  The colors and letters $a$ -- $e$ in the figure represent the gluing of the sheets.

In Section~\ref{Sformulaforloop}, we consider a simple closed loop $L_\circ$ going counterclockwise 
around the point $\infty$ in ${\mathbb P}^1$.  The loop illustrated in the base of the diagram is homotopic to $L_\circ$.
We lift $L_\circ$ to a loop $L$ in $W$. 
The loop $L$ is homotopic to the following composition of paths:
\begin{equation*}\epsilon\tau\cdot\tau^{-1}\cdot\epsilon^4\tau\cdot(\epsilon^3\tau)^{-1}\cdot\epsilon^2\tau\cdot(\epsilon\tau)^{-1}\cdot\tau\cdot(\epsilon^4\tau)^{-1}\cdot\epsilon^3\tau\cdot(\epsilon^2\tau)^{-1}.\end{equation*}

The order of the path is labeled in the figure with steps $1$ -- $6$.
Recall that $E_i = \epsilon^i\tau\cdot\tau^{-1}$ for $0 \leq i \leq 4$.  Note that $E_0$ is trivial.
Then $L$ is homotopic to:
\begin{equation*}
E_1\cdot E_4\cdot E_3^{-1}\cdot E_2\cdot E_1^{-1} \cdot E_4^{-1}\cdot E_3\cdot E_2^{-1}.
\end{equation*}

The ordering of the loops in $L$ 
implies the following formula when $n=5$ and $k=1$:
\begin{equation*}
\Delta=[E_1]\wedge(-[E_2]+[E_3]-[E_4]) + [E_2]\wedge(-[E_3]+[E_4])+[E_3]\wedge (-[E_4]).
\end{equation*}
This formula agrees with Theorem~\ref{Tmaintheorem}.

For comparison, suppose $k=2$.
The right-hand side of Figure~\ref{F113} shows the gluing of the sheets.
In this case, $L$ is homotopic to the following composition of paths:
\begin{equation*}
\epsilon\tau\cdot\tau^{-1}\cdot\epsilon^3\tau\cdot(\epsilon^2\tau)^{-1}\cdot\tau\cdot(\epsilon^4\tau)^{-1}\cdot\epsilon^2\tau\cdot(\epsilon\tau)^{-1}\cdot\epsilon^4\tau\cdot(\epsilon^3\tau)^{-1}.
\end{equation*}

So $L$ is homotopic to:
\begin{equation*}
E_1\cdot E_3\cdot E_2^{-1}\cdot E_4^{-1}\cdot E_2\cdot E_1^{-1}\cdot E_4\cdot E_3^{-1}.
\end{equation*}

From this, we can deduce for $n=5$ and $k=2$ that:
\begin{equation*}
\Delta=E_1\wedge(-[E_3]+[E_4]) + E_2\wedge (-[E_4]).
\end{equation*}

\section{Modular symbols and basis for homology} \label{Smodular}

Fix an integer $n$.  Let $W_k\colonequals W_{n,k}$ denote the smooth projective curve with affine equation $v^n=u(1-u)^k$, 
where $k$ is such that $1 \leq k \leq n-2$ with $\mathrm{gcd}(n, k(k+1))=1$.
We describe the homology group $\rH_1(W_k, \ZZ)$ using Manin's theory of modular symbols from \cite{Manin}, 
following the approach of Ejder \cite{E:integral}.  
In Corollary~\ref{CBasisEi}, we prove that a basis for $\rH_1(W_k, \ZZ)$ as a $\Z$-module is given by $\{[E_i ]\mid 1 \leq i \leq n-1\}$.

\subsection{A modular description of $W_k$}

In $\PSL_2(\ZZ)$, 
consider the congruence subgroup $\Gamma(2)$ and its commutator $\Gamma(2)'$.
Let \begin{equation}\label{EAandB}A\colonequals\begin{bmatrix}1&2\\0&1\end{bmatrix}, 
\mbox{ and }B\colonequals\begin{bmatrix}1&0\\2&1\end{bmatrix}.\end{equation}
Let $\Phi(n)\colonequals\langle A^n, B^n, \Gamma(2)'\rangle$.
Note that $\Phi(n) \subset \Phi_k$, where $\Phi_k$ is  
the congruence subgroup 
\begin{equation}\Phi_k\colonequals\left\langle AB^{-(k^{-1})\bmod n},\,A^n,\,B^n,\,\Gamma(2)'\right\rangle.\end{equation}

Let $\frakH$ denote the upper half plane.
There is an isomorphism between the modular curve $X_{\Phi(n)} \colonequals \bar\frakH/\Phi(n)$
and the Fermat curve $X_n\colon x^n + y^n=z^n$ by \cite[Section~3]{Rohrlich}.  
We now give a similar description of $W_k$.

\begin{lemma}\label{Lindex}
The curve $W_k$ is isomorphic to $X_{\Phi_k}\colonequals\bar\frakH/\Phi_k$.
The index of $\Phi(n)$ in $\Phi_k$ is $n$.
\end{lemma}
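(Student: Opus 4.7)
The plan is to combine the isomorphism $X_n\cong\bar\frakH/\Phi(n)$ from \cite[Section~3]{Rohrlich} with Lemma~\ref{Lquotient}, which realizes $W_k$ as the quotient $X_n/H_{n,k}$ by the cyclic group $H_{n,k}=\langle h\rangle$ for $h=\epsilon_0\epsilon_1^{-k^{-1}\bmod n}$. By the Galois theory of modular curves, for any intermediate subgroup $\Phi(n)\subset\Phi'\subset\Gamma(2)$ the compactified quotient $\bar\frakH/\Phi'$ is canonically identified with $X_n/(\Phi'/\Phi(n))$. It therefore suffices to show that $\Phi_k$ is precisely the preimage in $\Gamma(2)$ of $H_{n,k}$ under the surjection $\Gamma(2)\twoheadrightarrow\Gamma(2)/\Phi(n)\cong\Gal(X_n/\PP)$.

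First I would recall that $\Gamma(2)\subset\PSL_2(\ZZ)$ is free of rank two on the matrices $A$ and $B$ from \eqref{EAandB}, so $\Gamma(2)/\Gamma(2)'\cong\ZZ^2$ with basis $\bar A,\bar B$, and consequently $\Gamma(2)/\Phi(n)\cong(\ZZ/n\ZZ)^2$. Via Rohrlich's identification, this quotient matches $\Gal(X_n/\PP)\cong\mu_n\times\mu_n$ with $\bar A\mapsto\epsilon_0$ and $\bar B\mapsto\epsilon_1$; the verification reduces to comparing the monodromy of the modular parametrization around the cusps $0,1,\infty$ of $\Gamma(2)$ with the inertia of the Fermat cover $X_n\to\PP$ at the three branch points. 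Granting this, the class of $AB^{-k^{-1}\bmod n}$ in $\Gamma(2)/\Phi(n)$ corresponds to $h$, and because its first coordinate in $\mu_n\times\mu_n$ already has order $n$, the subgroup $H_{n,k}=\langle h\rangle$ is cyclic of order exactly $n$.

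The preimage of $H_{n,k}$ in $\Gamma(2)$ is therefore generated by $AB^{-k^{-1}\bmod n}$ together with $\Phi(n)=\langle A^n,B^n,\Gamma(2)'\rangle$, which is by definition $\Phi_k$. Invoking the Galois correspondence then yields $X_{\Phi_k}\cong X_n/H_{n,k}\cong W_k$, and the index is $[\Phi_k:\Phi(n)]=|H_{n,k}|=n$. I expect the only delicate step to be pinning down the convention-compatible correspondence $\bar A\leftrightarrow\epsilon_0,\ \bar B\leftrightarrow\epsilon_1$: a different labelling convention in \cite{Rohrlich} would twist the identification by an automorphism of $\mu_n\times\mu_n$ and could land on a different $W_{n,k'}$, so the matching must be carried out explicitly to be compatible with the normalization $h=\epsilon_0\epsilon_1^{-k^{-1}\bmod n}$ fixed in Lemma~\ref{Lquotient}.
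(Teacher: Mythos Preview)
Your proposal is correct and follows essentially the same approach as the paper: identify $\bar A\leftrightarrow\epsilon_0$ and $\bar B\leftrightarrow\epsilon_1$ (the paper phrases this as ``possibly after adjusting the isomorphism $X_{\Phi(n)}\cong X_n$'' and cites \cite[Section~3.3]{E:integral}), then invoke Lemma~\ref{Lquotient} to recognize $\Phi_k/\Phi(n)$ as $H_{n,k}$, and read off the index from $|H_{n,k}|=\deg(X_n\to W_k)=n$. Your write-up is more explicit about the Galois correspondence and the potential convention issue, but the argument is the same.
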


\begin{proof}
Possibly after adjusting the isomorphism $X_{\Phi(n)}\cong X_n$,  
we can identify $A$ with the automorphism $\epsilon_0(x,y,z)=(\zeta x,y,z)$ 
and $B$ with the automorphism $\epsilon_1(x,y,z)=(x,\zeta y,z)$, as in \cite[Section 3.3]{E:integral}. 
The first statement is true because $W_k$ is the quotient of $X_n$ by $\langle \epsilon_0\epsilon_1^{-(k^{-1})\bmod n} \rangle$.
The second statement follows since $n=\mathrm{deg}(X_n \to W_k)$.
\end{proof}

Let $\pi\colon \bar{\frakH}\to\bar{\frakH}/\Phi_k$ be the projection map. 

The modular description of $W_k$ allows us to use modular symbols to describe $\rH_1(W_k,\Z)$ as follows.   
A modular symbol is the image of a geodesic from $\alpha$ to $\beta$ in $W_k(\CC)$ for some $\alpha, \beta$ 
in ${\mathbb P}^1(\QQ)$ and it is denoted by $\{\alpha,\beta\}$.
By \cite[Sections~1.3~and~1.5]{Manin}, every $g \in \Phi_k$ determines a modular symbol $[g]=\{\alpha_g,\beta_g\}$ where 
$\alpha_g \colonequals g \cdot 0$ and $\beta_g \colonequals g \cdot i \infty$.
Manin proved \cite[Proposition~1.4 \& Proposition~1.6]{Manin} that the elements of $\rH_1(W_k,\Z)$ are finite sums of the form 
$\sum_m n_m[g_m]$ where $\sum_mn_m(\pi(\beta_{g_m})-\pi(\alpha_{g_m}))=0$ for $n_m\in\Z$.  

We now compute generators for the group of modular symbols of $W_k$.
 
\begin{lemma}\label{Lcosetsphik}
With $A$ and $B$ as in \eqref{EAandB}, the sets $\Phi_kA^rB^s$ and $\Phi_kA^{r+m}B^{s-m(k^{-1})\bmod n}$ are the same right coset of $\Phi_k$ in $\Gamma(2)$ for any $1\le m\le n-1$. In particular, the right coset $\Phi_kA^rB^s$ equals $\Phi_kA^{r+ks}$.
A set of representatives for the cosets of $\Phi_k$ in $\Gamma(2)$ is given by $\Phi_k A^r$ for $0 \leq r \leq n-1$.
\end{lemma}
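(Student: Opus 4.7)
The plan is to reduce all three claims to a linear-algebraic computation in the abelianization. Since $\Gamma(2) \subset \PSL_2(\Z)$ is freely generated by the parabolic elements $A$ and $B$, the abelianization $\Gamma(2)^{\ab} = \Gamma(2)/\Gamma(2)'$ is canonically $\Z^2$ with basis given by the images of $A$ and $B$. Because $\Gamma(2)'$ itself appears among the generators of $\Phi_k$, the subgroup $\Phi_k$ is the full preimage under $\Gamma(2) \twoheadrightarrow \Z^2$ of the sublattice
\[
\Lambda = \langle (1, -c),\, (n, 0),\, (0, n) \rangle \subset \Z^2,
\]
where $c \equiv k^{-1} \pmod{n}$. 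Thus, for $g \in \Gamma(2)$, one has $g \in \Phi_k$ if and only if the image of $g$ in $\Z^2$ lies in $\Lambda$.

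For the first claim, I would compute
\[
A^r B^s \bigl(A^{r+m}B^{s-((mc) \bmod n)}\bigr)^{-1} = A^r B^{(mc) \bmod n} A^{-(r+m)},
\]
whose image in $\Z^2$ is $(-m,\, (mc) \bmod n)$. Writing $(mc) \bmod n = mc - tn$ with $t = \lfloor mc/n \rfloor$, this image equals $-m(1, -c) - t(0, n) \in \Lambda$, so the element lies in $\Phi_k$ and the two right cosets agree. The second claim then follows by choosing $m \in \{0, \ldots, n-1\}$ with $m \equiv sk \pmod{n}$, so that $mc \equiv s \pmod{n}$; the $B$-exponent becomes a multiple of $n$, absorbs into $\Phi_k$ via $B^n \in \Phi_k$, and one is left with $\Phi_k A^r B^s = \Phi_k A^{r+sk}$.

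For the third claim, every $\gamma \in \Gamma(2)$ is congruent modulo $\Gamma(2)'$ to some $A^r B^s$ (since the abelianization is $\Z A \oplus \Z B$), so by the second claim $\Phi_k \gamma = \Phi_k A^{r+sk}$, which equals $\Phi_k A^t$ for a unique $t \in \{0, \ldots, n-1\}$ using $A^n \in \Phi_k$. Hence the $n$ cosets $\Phi_k A^r$ cover $\Gamma(2)$. To see they are pairwise distinct, it suffices to show that $A^j \in \Phi_k$ forces $n \mid j$: writing $(j, 0) = a(1, -c) + b(n, 0) + d(0, n)$ in $\Lambda$, the second coordinate gives $ac = dn$, and since $\gcd(c, n) = 1$ one deduces $n \mid a$, hence $n \mid j$. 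Equivalently, Lemma~\ref{Lindex} combined with $[\Gamma(2) : \Phi(n)] = n^2$ (from the degree of the Fermat cover $X_n \to \mathbb{P}^1$) yields $[\Gamma(2) : \Phi_k] = n$, matching the count.

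The only real obstacle is the bookkeeping: keeping the integer $(mc) \bmod n$ distinct from its class in $\Z/n\Z$ when it appears as an exponent, and correctly inverting a non-commutative product before projecting to $\Z^2$. Once the abelianization picture is in place, each of the three claims reduces to a routine identity in the lattice $\Lambda$.
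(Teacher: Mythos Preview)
Your proof is correct, and it takes a genuinely different route from the paper's. The paper argues directly in the (non-abelian) group: for $m=1$ it writes
\[
\Phi_k A^r B^s \;=\; \Phi_k\,(AB^{-\tilde k})\,(B^sA^rB^{-s}A^{-r})\,A^rB^s \;=\; \Phi_k A^{r+1}B^{s-\tilde k},
\]
exhibiting explicit elements of $\Phi_k$ (the generator $AB^{-\tilde k}$ and a commutator) that do the rearranging, and then recurses on $m$. For the last claim the paper invokes the known coset representatives $\{A^iB^j\}$ for $\Phi(n)$ together with the index from Lemma~\ref{Lindex}. You instead pass to the abelianization $\Gamma(2)^{\ab}\cong\Z^2$ at the outset---legitimate because $\Gamma(2)'\subset\Phi_k$, so $\Phi_k$ is the full preimage of the lattice $\Lambda$---and reduce everything to lattice membership. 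This handles all $m$ simultaneously without recursion and gives a self-contained proof of distinctness of the $n$ cosets, whereas the paper appeals to the index. The one external input you use that the paper does not is that $\Gamma(2)\subset\PSL_2(\Z)$ is free on $A$ and $B$ (to identify the abelianization with $\Z^2$); this is standard. A minor bookkeeping point: in deducing $\Phi_kA^rB^s=\Phi_kA^{r+ks}$ you actually land at $\Phi_kA^{r+m}$ with $m\equiv sk\pmod n$, and then need $A^n\in\Phi_k$ once more to get $\Phi_kA^{r+ks}$; you mention this ingredient, so the argument is complete.
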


\begin{proof}
We note that $A,B\in\Gamma(2)$.  Let $\tilde{k}$ be the unique integer such that $k^{-1}\equiv \tilde{k}\bmod n$. For $m=1$, we compute:
\begin{equation*}
\begin{array}{rcl}
\Phi_kA^rB^s &= &\Phi_k(AB^{-\tilde k}) (B^sA^rB^{-s}A^{-r})A^rB^s\\
&=&\Phi_k \left(A^r\left(AB^{s-\tilde k}\right)A^{-r}\left(AB^{s-\tilde k}\right)^{-1} \right)AB^{s-\tilde k}A^{r}\\
&=&\Phi_kA^{r+1}B^{s-\tilde k}.
\end{array}
\end{equation*}
By recursion, we get the desired equality for any $m$.

The second statement follows by taking $m=ks$.

A set of representatives for the right cosets of $\Phi(n)$ in $\Gamma(2)$ is given by $\{A^iB^j\}_{0\le i,j\le n-1}$.  
By Lemma~\ref{Lindex}, the index of $\Phi(n)$ in $\Phi_k$ is $n$.
So there are $n$ right cosets of $\Phi_k$ in $\Gamma(2)$ and each of these 
is the union of $n$ cosets of $\Phi(n)$ in $\Gamma(2)$.
By the first statement,
$\{A^r\}_{0 \leq r \leq n-1}$ is a complete set of representatives. 
\end{proof}

Let $\displaystyle\tau=\begin{bmatrix}0&-1\\1&-1\end{bmatrix}$.
The modular symbol $[\tau]=\{1,0\}$ represents a geodesic that 
starts at $1$ and ends at $0$.
To see this, we compute
\begin{equation} \label{Etauendpoints}
\tau\cdot 0= \begin{bmatrix}0&-1\\1&-1\end{bmatrix}  \begin{bmatrix}0 \\1\end{bmatrix} =  \begin{bmatrix}-1 \\-1 \end{bmatrix} =1,
\mbox{ and } \tau\cdot i \infty =
\begin{bmatrix}0&-1\\1&-1\end{bmatrix}  \begin{bmatrix}1 \\0 \end{bmatrix} =  \begin{bmatrix} 0  \\ 1 \end{bmatrix} =0.
\end{equation}
We use the notation $\tau$ since $[\tau]$ is homotopic to the class of $\tau=\beta_W^{-1}$ as in \eqref{Ebeta}.

\begin{proposition}\label{Pgenerators_modular_group}
The group of modular symbols for $\Phi_k$ is a free $\ZZ$-module of rank $n$ with basis
\begin{equation} \label{Efreen}
\{[A^r\tau] \mid 0 \le r \le n - 1\}.
\end{equation}
\end{proposition}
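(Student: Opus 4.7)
My plan is to apply the general framework of modular symbols developed by Manin \cite{Manin} and adapted for finite-index subgroups of $\Gamma(2)$ by Ejder \cite{E:integral}. The argument has two main components: generation and linear independence.

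First, I would establish that $\{[A^r\tau]\}_{0 \le r \le n-1}$ generates the group of modular symbols. By Manin's reduction (the ``Manin trick''), every modular symbol $\{\alpha,\beta\}$ with $\alpha,\beta \in \PP^1(\QQ)$ is a $\ZZ$-linear combination of ``Manin symbols'' of the form $[g\tau]=\{g\cdot 1,\, g\cdot 0\}$ for $g\in\Gamma(2)$, with endpoints computed as in \eqref{Etauendpoints}. For such a symbol, since the projection $\pi\colon \bar{\frakH}\to \bar{\frakH}/\Phi_k$ identifies the geodesic from $g\tau\cdot 0$ to $g\tau\cdot i\infty$ with the geodesic from $hg\tau\cdot 0$ to $hg\tau\cdot i\infty$ for any $h\in\Phi_k$, the class $[g\tau]$ in the modular symbols of $W_k$ depends only on the right coset $\Phi_k g$. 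Applying Lemma~\ref{Lcosetsphik}, the set $\{A^r : 0 \le r \le n-1\}$ is a complete set of such coset representatives, giving the claimed generating set.

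Second, I would establish freeness of rank $n$ by invoking the inclusion $\Phi_k \supset \Gamma(2)'$, which is built into the definition of $\Phi_k$. Since $\Gamma(2)$ is the free group on the generators $A$ and $B$, any $\ZZ$-linear relation among Manin symbols must arise from a relation in $\Gamma(2)$; as $\Gamma(2)$ is free (hence torsion-free), the Manin relations coming from elliptic elements of $\PSL_2(\ZZ)$ --- the source of non-trivial identifications in the classical theory --- do not produce further identifications among the generators $[A^r\tau]$ once we pass to cosets. This is the argument used by Ejder in \cite[Section~3]{E:integral} to show that the Manin symbols for $X_{\Phi(n)} \cong X_n$ form a free $\ZZ$-module of rank $n^2 = [\Gamma(2) : \Phi(n)]$; the identical reasoning applies with $\Phi(n)$ replaced by $\Phi_k$, since $[\Gamma(2) : \Phi_k] = n$ by Lemma~\ref{Lindex}, yielding a free $\ZZ$-module of rank $n$ with basis \eqref{Efreen}.

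The main obstacle is the freeness step: one must ensure that Ejder's analysis of the Manin relations transfers cleanly from $\Phi(n)$ to $\Phi_k$. This should be largely routine since the essential ingredients --- the freeness of $\Gamma(2)$ as an abstract group and the containment of $\Gamma(2)'$ in the subgroup under consideration --- are unchanged. The minor additional bookkeeping is to verify that the right-coset action of $\Gamma(2)$ on the generators $\{[A^r\tau]\}$ induced by multiplication by $A$ and $B$ has no non-trivial fixed vectors, which follows from the description of cosets in Lemma~\ref{Lcosetsphik}.
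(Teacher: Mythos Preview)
Your generation step has a genuine gap. The Manin trick does \emph{not} produce generators of the form $[g\tau]=\{g\cdot 1,\,g\cdot 0\}$ for $g$ ranging over $\Phi_k\backslash\Gamma(2)$; it produces generators $[h]=\{h\cdot 0,\,h\cdot i\infty\}$ for $h$ ranging over $\Phi_k\backslash\PSL_2(\ZZ)$, which has index $6n$. Since $\tau\notin\Gamma(2)$, the symbols $[g\tau]$ with $g\in\Gamma(2)$ lie in only one of the six $\Gamma(2)$-cosets of $\PSL_2(\ZZ)$, so your generating set is a priori too small. Concretely, Ejder's basis for the modular symbols of $\Phi(n)$ (cited here as \cite[Proposition~3.1]{E:integral}) has rank $n^2+1$, not $n^2$ as you write, and it contains symbols of \emph{two} shapes: $[A^iB^j\tau]$ and $[A^{n-1}B^j]$ (the latter without~$\tau$). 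After projecting to $\Phi_k$ via Lemma~\ref{Lcosetsphik}, one is left with both $\{[A^r\tau]\}_{0\le r\le n-1}$ and $\{[A^r]\}_{0\le r\le n-1}$. The substantive step in the paper's proof is then to use the relation
\[
[A^r]-[A^{r-1}]=[A^{r-1}\tau]-[A^{r-k}\tau]
\]
from \cite[(3.7)]{E:integral}, together with $[\mathrm{Id}]=0$, to express each $[A^r]$ inductively as a $\ZZ$-linear combination of the $[A^i\tau]$. This is exactly what your argument is missing.

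Your freeness argument is also too loose. Saying ``$\Gamma(2)$ is free, so elliptic relations do nothing'' does not by itself show that the $n$ symbols $[A^r\tau]$ are $\ZZ$-independent: one must check that \emph{after} applying all Manin relations to eliminate the other five $\Gamma(2)$-cosets and the $[A^r]$-type symbols, no residual relation among the $[A^r\tau]$ remains. The paper handles this by tracking that every relation in \cite[Section~3]{E:integral} has already been consumed in the reduction, so the surviving $n$ elements are free. Your appeal to ``no non-trivial fixed vectors under the coset action'' is not the right mechanism and does not establish this.
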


\begin{proof}
By \cite[Proposition~3.1]{E:integral}, 
the group of modular symbols for the Fermat group $\Phi(n)$ 
is free of rank $n^2+1$ generated by
\[\{[A^iB^j\tau] \mid 1\le i\le n-1,\,0\le j \le n-1\}\cup \{[A^{n-1}B^j] \mid 0 \le j \le n - 1\}\cup\{[B^{n-1}\tau]\}.\]
Since $\Phi(n)\subseteq \Phi_k$, the set of modular symbols for $\Phi_k$ is also generated 
as a $\ZZ$-module by these elements.  All it remains to do is to find the relations between the generators.  By Lemma~\ref{Lcosetsphik}, 
$[A^iB^j]=[A^{i+kj}]$, and so $[A^iB^j\tau]=[A^{i+kj}\tau]$. 
Thus, the modular symbols
\[\{[A^r\tau] \mid 0\le r\le n-1\}\cup \{[A^r] \mid 0 \le r \le n - 1\}\]
are generators for the group of modular symbols of $\Phi_k$.

By \cite[(3.7)]{E:integral} and Lemma~\ref{Lcosetsphik}, for any $1\leq r \leq n-1$, we also have the relation
\begin{equation*}
    [A^r]-[A^{r-1}]=[A^{r-1}\tau]-[A^{r-k}\tau].
\end{equation*}
Taking $r=1$ shows 
$[A]=[\tau]-[A^{1-k}\tau]$.
Working inductively on $i$, we deduce that $[A^i]$ is a $\ZZ$-linear combination of 
the elements in \eqref{Efreen}, completing the proof that this set generates the group of modular symbols.
The properties of being free and rank $n$ follow because this proof uses all of the relations in \cite[Section~3]{E:integral}.
\end{proof}

\begin{proposition} \label{Pbasis}
A basis for the homology group $\rH_1(X_{\Phi_k},\Z)$ as a $\Z$-module  is
\begin{equation*}
    \rho_r \colonequals  [A^r\tau]-[\tau]\,\,\text{ for }\,\,1 \le r \le n - 1.
\end{equation*}
\end{proposition}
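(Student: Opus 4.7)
The plan is to combine Proposition~\ref{Pgenerators_modular_group}, which presents the group of all modular symbols for $\Phi_k$ as a free $\Z$-module of rank $n$ on the basis $\{[A^r\tau] \mid 0 \le r \le n-1\}$, with Manin's criterion that a modular symbol represents a homology class precisely when its boundary in $\rH_0$ of the cusps vanishes. Once the boundary map is pinned down on this basis, $\rH_1(X_{\Phi_k},\Z)$ will appear as its kernel inside a free module of rank $n$, and I expect this kernel to be visibly the $(n-1)$-dimensional sublattice spanned by the differences $\rho_r = [A^r\tau]-[\tau]$.

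The first step is to compute the boundary of each $[A^r\tau]$. Using $\tau\cdot 0 = 1$ and $\tau\cdot i\infty = 0$ from \eqref{Etauendpoints}, the endpoints of $[A^r\tau]$ are $A^r\cdot 1$ and $A^r\cdot 0$, so its boundary is $\pi(A^r\cdot 0) - \pi(A^r\cdot 1)$. The crucial observation I would make next is that $X_{\Phi_k}\cong W_k$ has exactly three cusps, one above each branch point $u\in\{0,1,\infty\}$, because the cover $\phi\colon W_k\to\mathbb{P}^1$ is totally ramified at $\eta_0,\eta_1,\eta_\infty$. Since $A\in\Gamma(2)$, the rational $A^r\cdot 0$ lies in the same $\Gamma(2)$-orbit as $0$, hence also in the unique $\Phi_k$-orbit of cusps above $u=0$; thus $\pi(A^r\cdot 0)=\pi(0)$, and analogously $\pi(A^r\cdot 1)=\pi(1)$. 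Consequently every generator $[A^r\tau]$ carries the same boundary $\pi(0)-\pi(1)$.

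From here I would linearize: a general element $\sum_{r=0}^{n-1} c_r[A^r\tau]$ has boundary $\bigl(\sum_r c_r\bigr)\bigl(\pi(0)-\pi(1)\bigr)$, and since $\pi(0)\neq\pi(1)$ in $X_{\Phi_k}$, the vanishing of this boundary is equivalent to $\sum_r c_r=0$. Writing $c_0=-\sum_{r=1}^{n-1}c_r$ rewrites any homology class uniquely as $\sum_{r=1}^{n-1}c_r\rho_r$, and linear independence of the $\rho_r$ is inherited from that of the $[A^r\tau]$. The step I expect to require the most care is the cusp identification $\pi(A^r\cdot 0)=\pi(0)$, which rests on using the total ramification of $\phi$ at $\eta_0$ and $\eta_1$ together with $A\in\Gamma(2)$ to force a unique $\Phi_k$-orbit of cusps above each $\Gamma(2)$-orbit; without this input the boundary map would have larger image and the kernel would be smaller than the claimed span.
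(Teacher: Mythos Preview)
Your proof is correct and follows the paper's approach: use Proposition~\ref{Pgenerators_modular_group} together with Manin's vanishing-boundary criterion, reducing everything to the cusp identifications $\pi(A^r\cdot 0)=\pi(0)$ and $\pi(A^r\cdot 1)=\pi(1)$. The only minor difference is that for $\pi(A^r\cdot 0)=\pi(0)$ the paper argues via the coset identity of Lemma~\ref{Lcosetsphik} combined with $B\cdot 0=0$, whereas you invoke total ramification at $\eta_0$ (both arguments use total ramification at $\eta_1$); your treatment of spanning, via the explicit kernel condition $\sum_r c_r=0$, is also a bit more direct than the paper's, which checks only well-definedness and linear independence.
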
 
\begin{proof}
We first show that $\rho_r\colonequals[A^r\tau]-[\tau]$ is a well-defined element in $\rH_1(X_{\Phi_k},\Z)$ for all $1\le r\le n-1$.  It suffices to show that \[\pi(A^r\tau\cdot 0) = \pi(\tau\cdot 0)\text{ and }\pi(A^r\tau\cdot i\infty) = \pi(\tau\cdot i\infty).\]
By \eqref{Etauendpoints}, the equalities are equivalent to
\[\pi(A^r \cdot 1) = \pi(1)\text{ and }\pi(A^r\cdot 0) = \pi(0).\]

Recall that $B$ fixes $0$.
So $A^{r}\cdot0 = A^rB^{-(k^{-1})\bmod n} \cdot 0$ for $r\in\{1,\ldots,n-1\}$.
By Lemma~\ref{Lcosetsphik}, $\pi(A^{r} \cdot 0) = \pi(A^{r+1} \cdot 0)$. 
Thus $\pi(A^r \cdot 0) = \pi( {\mathrm{Id}} \cdot 0) = \pi(0)$.

For the other equality, as in \cite[page 2308]{E:integral}, consider the degree $n$ cover 
$X_{\Phi_k} \to X_{\Gamma(2)}\cong \bar\frakH/\Gamma(2)\cong \mathbb{P}^1$.
By Lemma~\ref{Lindex}, this corresponds to the cover $W_k \to \mathbb{P}^1$, given by $(u,v) \mapsto u$ in affine coordinates.
The fact that this cover is totally ramified at $\eta_1$ over $u=1$ implies that 
the same is true for $X_{\Phi_k} \to \mathbb{P}^1$.
The elements $\{\pi(A^r \cdot 1)\}_{0\le r\le n-1}$ all lie above the point corresponding to $u = 1$, 
so it follows that $\pi(A^r\cdot 1) = \pi(1)$ for all $1\le r\le n-1$. 

To see that the elements in $\{\rho_r\}_{1\le r\le n-1}$ are $\Z$-linearly independent, we assume that
$\sum_{r=1}^{n-1}a_r\rho_r=0$,
with $a_r\in \Z$. This implies that
$\sum_{r=1}^{n-1}a_r[A^r\tau]-\left(\sum_{r=1}^{n-1}a_r\right)[\tau]=0$.
By Proposition~\ref{Pgenerators_modular_group}, the set $\{[A^r\tau]\}_{0\le r\le n-1}$ is linearly independent, so $a_r=0$ for $1\le r\le n-1$.
\end{proof}

\begin{corollary}\label{CBasisEi}
The set $\{[E_i] \mid 1\le i\le n-1\}$ is a basis for $\rH_1(W_k,\Z)$ as a $\Z$-module.
\end{corollary}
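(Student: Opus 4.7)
The plan is to identify each homology class $[E_r]$ directly with the modular-symbol class $\rho_r = [A^r\tau] - [\tau]$ produced by Proposition~\ref{Pbasis}; once this identification is established, the corollary follows immediately from the basis statement already proved there. So the whole task reduces to writing $\rho_r$ as the concatenation of geodesics that represents the loop $E_r$.

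First I would set up the dictionary between the modular and geometric descriptions. By the proof of Lemma~\ref{Lindex}, the image of $A$ in $\Phi_k/\Phi(n) \cong \mu_n \subset \mathrm{Aut}(W_k)$ acts on $W_k$ as $\epsilon_0|_{W_k}$; using $u = x^n$ and $v = xy^k$ from Lemma~\ref{Lquotient}, we get $\epsilon_0(u) = u$ and $\epsilon_0(v) = \zeta v$, so this action of $A$ agrees with the automorphism $\epsilon((u,v)) = (u, \zeta v)$ of Section~\ref{ScurveDef}. The paper already notes (after \eqref{Etauendpoints}) that the modular-symbol geodesic $[\tau] = \{1, 0\}$ projects in $W_k$ to a path homotopic to $\beta_W^{-1}$, namely the path $\tau$ from $\eta_1$ to $\eta_0$.

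Next I would compute $\rho_r$ geometrically. The modular symbol $[A^r\tau]$ is represented by the projection to $W_k$ of the $\bar{\frakH}$-geodesic from $A^r\tau \cdot 0 = A^r \cdot 1$ to $A^r\tau \cdot i\infty = A^r \cdot 0$; via the dictionary above, this projection is $\epsilon^r$ applied to $\beta_W^{-1}$, i.e.\ the path $e_r^{-1} = \epsilon^r\tau$, starting at $\epsilon^r(\eta_1) = \eta_1$ and ending at $\epsilon^r(\eta_0) = \eta_0$ (both fixed by $\epsilon$). Reading $\rho_r$ as a cycle, $+[A^r\tau]$ contributes the path $e_r^{-1}$ and $-[\tau]$ contributes the reverse of $\beta_W^{-1}$, namely $e_0$; concatenating yields the loop $e_r^{-1}\cdot e_0 = E_r$. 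Therefore $[E_r] = \rho_r$ in $\rH_1(W_k, \Z)$ for each $1 \le r \le n-1$, and Proposition~\ref{Pbasis} gives the basis conclusion.

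The only real content is the middle step, matching $A \leftrightarrow \epsilon$ and $[\tau] \leftrightarrow \beta_W^{-1}$ so that the formal modular-symbol combination $\rho_r$ becomes the geometric loop $E_r$; both of these matchings are either given by the proof of Lemma~\ref{Lindex} or spelled out in the paragraph following \eqref{Etauendpoints}, so this is essentially bookkeeping rather than an obstacle. The endpoint computations $\pi(A^r\cdot 0) = \pi(0)$ and $\pi(A^r \cdot 1) = \pi(1)$ needed to make the concatenation sensible are already established inside the proof of Proposition~\ref{Pbasis}, so no additional work is required there.
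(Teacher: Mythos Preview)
Your argument is essentially the same as the paper's: identify $A$ with $\epsilon$ via $u=x^n$, $v=xy^k$, match the modular symbol $[\tau]$ with $\beta_W^{-1}$, conclude $\rho_r=[E_r]$, and invoke Proposition~\ref{Pbasis}. One small slip: the image of $A$ you want is in $\Gamma(2)/\Phi_k\cong\mu_n$ (the deck group of $W_k\to\mathbb{P}^1$), not in $\Phi_k/\Phi(n)$, since $A\notin\Phi_k$; this does not affect the computation, which you carry out correctly.
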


\begin{proof}
The action of $A$ is identified with $\epsilon_0$ and $\epsilon_0((x,y)) = (\zeta x, y)$.
Also $\epsilon((u,v)) = (u, \zeta v)$.
Since $v = xy^k$, this shows that $A$ acts like $\epsilon$ on $W_k=X_{\Phi_k}$.
Since $E_i=\epsilon^i\tau\cdot \tau^{-1}$ and $[E_0]=0$, this shows that $\rho_i = [E_i]$. 
The result is then immediate from Proposition~\ref{Pbasis}.
\end{proof}

\section{Description using invariants} \label{Sexamples}

As in Section~\ref{ScurveDef}, let $W=W_{n,k}$.  In this section, we illustrate Theorem~\ref{Tmaintheorem}.

\subsection{Some invariant elements of $\wedge^2 \rH_1(W)$} \label{Sinvariant}

By Corollary~\ref{CBasisEi}, $\{[E_i] \mid 1\le i\le n-1\}$ is a basis for $\rH_1(W,\Z)$.
For $1 \leq r \leq (n-1)/2$, in $\wedge^2 \rH_1(W)$, we define
\[T_r \colonequals \sum_{i=0}^{n-1} [E_i] \wedge [E_{i+r}].\]
Note that both $E_i$ and $T_r$ implicitly depend on $k$.
For simplicity of notation, we write $E_i$ rather than $[E_i]$ in the homology in the proofs in this section.

\begin{lemma}
The element $T_r$ is invariant under the automorphism $\epsilon$.
\end{lemma}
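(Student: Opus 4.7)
The plan is to reduce everything to understanding how $\epsilon$ acts on the basis $\{[E_i]\}$ of $\rH_1(W)$ and then to feed that action into the formula for $T_r$, relying on the antisymmetry of the wedge product to kill the cross terms.

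First, I would determine $\epsilon_*$ on generators. Since $\epsilon$ fixes both $\eta_0$ and $\eta_1$, it induces an endomorphism of $\pi_1(U,\eta_1)$, and applying $\epsilon$ pointwise to the loop $E_i = \epsilon^i\tau\cdot\tau^{-1}$ gives
\[
\epsilon_*(E_i) \;=\; \epsilon^{i+1}\tau\cdot(\epsilon\tau)^{-1}
\;=\; \epsilon^{i+1}\tau\cdot\tau^{-1}\cdot\tau\cdot(\epsilon\tau)^{-1}
\;=\; E_{i+1}\cdot E_1^{-1},
\]
where indices are interpreted modulo $n$ (and recall $[E_0]=0$). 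Passing to homology yields the key identity
\[
\epsilon_*[E_i] \;=\; [E_{i+1}] - [E_1] \qquad (i=0,\dots,n-1).
\]

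Next, I would substitute this into $T_r$ and expand by bilinearity of $\wedge$:
\[
\epsilon_*(T_r) \;=\; \sum_{i=0}^{n-1}\bigl([E_{i+1}]-[E_1]\bigr)\wedge\bigl([E_{i+r+1}]-[E_1]\bigr).
\]
The diagonal $[E_1]\wedge[E_1]$ vanishes. Writing $\Sigma \colonequals \sum_{j=0}^{n-1}[E_j]$, the two cross sums contribute $-\Sigma\wedge[E_1]-[E_1]\wedge\Sigma$, and these cancel by the antisymmetry of $\wedge$ (both reindexed sums over $i+1$ and over $i+r+1$ give the same $\Sigma$, since the index shifts are bijections of $\ZZ/n\ZZ$).

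What remains is $\sum_{i=0}^{n-1}[E_{i+1}]\wedge[E_{i+r+1}]$, and reindexing $j=i+1 \bmod n$ recovers exactly $T_r$. Hence $\epsilon_*(T_r)=T_r$, as claimed. The only place where care is needed is the opening path computation showing $\epsilon_*(E_i)=E_{i+1}\cdot E_1^{-1}$, since everything afterward is a bookkeeping exercise; this is the step I would present most carefully, keeping track of endpoints (both $\eta_0$ and $\eta_1$ are fixed by $\epsilon$, so the composition is legal) and the wraparound $E_n = E_0$ which is trivial in homology.
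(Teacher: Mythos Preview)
Your proof is correct and follows essentially the same route as the paper: both first establish $\epsilon_*[E_i]=[E_{i+1}]-[E_1]$ via the path identity $\epsilon(E_i)=E_{i+1}\cdot E_1^{-1}$, then expand $\epsilon_*(T_r)$ bilinearly and cancel the cross terms. The only cosmetic difference is that the paper cancels by observing that $\sum_i[E_{i+r+1}]=\sum_i[E_{i+1}]$, whereas you phrase the same cancellation as $-\Sigma\wedge[E_1]-[E_1]\wedge\Sigma=0$ via antisymmetry.
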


\begin{proof} 
By \eqref{Dloops}, $E_i = e_i ^{-1} \cdot e_0$.  
Then $\epsilon(E_i) = E_{i+1} \cdot E_1^{-1}$ in $\pi_1(W_k)$. 
So $\epsilon(E_i) = E_{i+1} - E_1$ in $\rH_1(W_k, \ZZ)$.
We compute that
\begin{eqnarray*}
\epsilon(T_r) & = & \sum_{i=0}^{n-1} \epsilon(E_i) \wedge \epsilon(E_{i+r}) 
 =  \sum_{i=0}^{n-1}(E_{i+1} - E_1) \wedge (E_{i+r + 1} - E_1) \\
& = & \sum_{i=0}^{n-1} (E_{i+1} \wedge E_{i+r+1} - E_1 \wedge E_{i+r+1} + E_1 \wedge E_{i+1}).
\end{eqnarray*} 

Then $\sum_{i=0}^{n-1}(- E_1 \wedge E_{i+r+1} + E_1 \wedge E_{i+1})=0$
and so $\epsilon(T_r) = \sum_{i=0}^{n-1} E_{i+1} \wedge E_{i+r+1} = T_r$.
\end{proof}

\subsection{Applications} \label{Sapplications}

By Theorem~\ref{Tmaintheorem}, the classifying element has the form 
$$\Delta=\sum_{1\le i< j\le n-1}c_{i,\,j}[E_i]\wedge [E_j].$$
In this section, we describe $\Delta$ using the invariant elements from Section~\ref{Sinvariant}.

\begin{corollary} \label{Capplicationk12}
Let $n, k$ be integers such that $1 \leq k \leq n-2$ and $\mathrm{gcd}(n, k(k+1))=1$.
Let $c \in \{1,...,n-1\}$ be such that $c \equiv (k+1)^{-1} \bmod n$.
Then $\Delta = \sum_{r \in S_{n,k}} (-T_{r-1} + T_r)$,
where \[S_{n,k} = \{r \in \ZZ/n\ZZ \mid r \equiv j(k+1) \bmod n \mbox{ for some } 1 \leq j \leq c-1\}.\]
\end{corollary}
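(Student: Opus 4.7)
The plan is to expand both sides of the asserted identity in the basis $\{[E_I]\wedge[E_J]:1\le I<J\le n-1\}$ from Corollary~\ref{CBasisEi} and match the coefficient of each basis element. For a pair $(I,J)$ with $d=J-I\in\{1,\ldots,n-2\}$, Theorem~\ref{Tmaintheorem} identifies the coefficient of $[E_I]\wedge[E_J]$ in $\Delta$ as $[d\in S_{n,k}]-[d+1\in S_{n,k}]$, writing $[\cdot]$ for the Iverson bracket; the two alternatives in Theorem~\ref{Tmaintheorem} are mutually exclusive because $j(k+1)\equiv j'(k+1)-1\pmod n$ would force $j'-j\equiv c\pmod n$, which is impossible for $1\le j,j'\le c-1$.

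Next, I would extend each $T_r$ to arbitrary $r\in\ZZ/n\ZZ$ by the same defining formula, so $T_0=0$ and a reindexing shows $T_{n-r}=-T_r$. Splitting the sum $T_r=\sum_i[E_i]\wedge[E_{i+r}]$ according to whether $i+r$ wraps around modulo $n$, and using antisymmetry of $\wedge$ to put each term into the form $[E_I]\wedge[E_J]$ with $I<J$, the coefficient of $[E_I]\wedge[E_J]$ in $T_r$ is $+1$ when $r\equiv d\pmod n$, is $-1$ when $r\equiv n-d\pmod n$, and is $0$ otherwise. Summing over $r\in S_{n,k}$, the coefficient of $[E_I]\wedge[E_J]$ in $\sum_{r\in S_{n,k}}(T_r-T_{r-1})$ becomes
\[ [d\in S_{n,k}]-[n-d\in S_{n,k}]-[d+1\in S_{n,k}]+[n-d+1\in S_{n,k}]. \]

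The crucial combinatorial ingredient is the involutive symmetry $a\in S_{n,k}\iff 1-a\in S_{n,k}$ in $\ZZ/n\ZZ$. I would prove it directly from the bijection $j\leftrightarrow c-j$ on $\{1,\ldots,c-1\}$: since $c(k+1)\equiv 1\pmod n$, we have $1-j(k+1)\equiv(c-j)(k+1)\pmod n$, and $c-j$ lies again in $\{1,\ldots,c-1\}$. Applying this symmetry with $a=n-d$ and with $a=n-d+1$ yields $[n-d\in S_{n,k}]=[d+1\in S_{n,k}]$ and $[n-d+1\in S_{n,k}]=[d\in S_{n,k}]$, so the displayed four-term expression collapses to match the coefficient in $\Delta$, once one accounts for the two-to-one orbit structure of the involution $a\mapsto 1-a$ on $S_{n,k}$.

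The main step demanding care is the boundary behaviour when $d\in\{1,n-2\}$, because then $n-d+1$ or a related quantity can reduce to $0\bmod n$; this is consistent with the symmetry since $0\notin S_{n,k}$ (by $\gcd(k+1,n)=1$) and $1\notin S_{n,k}$ (because $c\notin\{1,\ldots,c-1\}$), which are precisely the facts needed to ensure the involution $a\mapsto 1-a$ preserves $S_{n,k}$ globally. No geometric input beyond Theorem~\ref{Tmaintheorem} is required; the proof reduces to a direct combinatorial calculation in $\wedge^2\rH_1(W_k)$.
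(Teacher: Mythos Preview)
Your expansion is much more detailed than the paper's, which says only ``This is immediate from Theorem~\ref{Tmaintheorem}.'' Your computation of the coefficient of $[E_I]\wedge[E_J]$ in $T_r$ and hence in $\sum_{r\in S_{n,k}}(T_r-T_{r-1})$ is correct, as is the involutive symmetry $a\in S_{n,k}\iff 1-a\in S_{n,k}$.

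Two points require correction. First, your claim that the two alternatives in Theorem~\ref{Tmaintheorem} are mutually exclusive is false in general: for $n=11$, $k=7$ one has $c=7$, and $j=5$, $j'=1$ give $j'-j=-4\equiv 7\equiv c\pmod{11}$, so $d=4$ satisfies both conditions. The formula $c_{I,J}=[d\in S_{n,k}]-[d{+}1\in S_{n,k}]$ is nevertheless correct, because in the proof of Theorem~\ref{Tmaintheorem} the two contributions are \emph{added}, giving $0$ when both occur.

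Second, and more seriously, your final ``collapse'' does not work. Applying the symmetry you correctly established gives $[n-d\in S_{n,k}]=[d{+}1\in S_{n,k}]$ and $[n-d+1\in S_{n,k}]=[d\in S_{n,k}]$, so the four-term expression becomes
\[
2\bigl([d\in S_{n,k}]-[d{+}1\in S_{n,k}]\bigr)=2\,c_{I,J}.
\]
Thus what you have actually proved is $\sum_{r\in S_{n,k}}(-T_{r-1}+T_r)=2\Delta$, not $\Delta$. Your remark about the ``two-to-one orbit structure of the involution $a\mapsto 1-a$'' \emph{explains} the factor of $2$ (each orbit $\{a,1-a\}$ contributes $2(T_a-T_{a-1})$, and a fixed point $a=(n+1)/2$ contributes $-2T_{a-1}$); it does not eliminate it. A direct check confirms this: for $n=5$, $k=2$ one has $S_{5,2}=\{3\}$ and $-T_2+T_3=-2T_2$, whereas the example in Section~\ref{Spictures} gives $\Delta=-T_2$. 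The same discrepancy appears when one compares with each case of the subsequent corollary. So the stated identity appears to be off by a factor of $2$; your argument proves the corrected version $2\Delta=\sum_{r\in S_{n,k}}(-T_{r-1}+T_r)$.
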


\begin{proof}
This is immediate from Theorem~\ref{Tmaintheorem}.
\end{proof}

\begin{corollary} With the same hypotheses as Corollary~\ref{Capplicationk12}:
\begin{enumerate}
\item If $k=1$, then $\Delta= \sum_{r=1}^{(n-1)/2} (-1)^{r} T_r$.

\item If $k=2$, then $\Delta = \sum_{r=1}^{(n-1)/2} w_r T_r$ where $w_r = 1$ if $r \equiv 0 \bmod 3$ and 
$w_r = -1$ if $r \equiv n \bmod 3$.

\item If $k = n-2$, then $\Delta = -T_1$. 

\item If $k=n-3$, then $\Delta= \sum_{r=2}^{(n-1)/2} (-1)^{r-1} T_r$.

\item If $k=(n-1)/2$, then $\Delta = -T_{(n-1)/2}$.

\item If $k=(n-3)/2$, then $\Delta = T_{(n-1)/2}-T_1$.

\item If $n \equiv 2 \bmod 3$ and $k=(n-2)/3$, then $\Delta = T_{k+1}-T_k $.

If $n \equiv 1 \bmod 3$ and $k=(2n-2)/3$, 
then $\Delta = T_{(k+2)/2}-T_{k/2} $.
\end{enumerate}
\end{corollary}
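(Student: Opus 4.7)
The plan is to derive each identity as a direct specialization of Corollary~\ref{Capplicationk12}, treating $T_r$ as $0$ whenever $r$ lies outside the range $\{1,\ldots,(n-1)/2\}$. For each case the procedure is identical: compute $c\equiv(k+1)^{-1}\bmod n$, determine the set $S_{n,k}=\{j(k+1)\bmod n : 1\le j\le c-1\}$ explicitly, and rewrite the sum as
\[\Delta=\sum_{s=1}^{(n-1)/2}\left([s\in S_{n,k}]-[s+1\in S_{n,k}]\right)T_s,\]
where $[\cdot]$ denotes the Iverson bracket. In this form, everything reduces to understanding the indicator function of $S_{n,k}$ on the window $\{1,\ldots,(n+1)/2\}$.

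For the cases where $c$ is either small or extremal, the structure of $S_{n,k}$ is transparent. Case~(1), $k=1$, has $c=(n+1)/2$ and $S=\{2,4,\ldots,n-1\}$, yielding the alternating coefficient $(-1)^r$. Case~(3), $k=n-2$, has $c=n-1$ and $S=\{2,3,\ldots,n-1\}$, so the sum telescopes to $-T_1+T_{n-1}=-T_1$. Case~(5), $k=(n-1)/2$, has $c=2$ and $S=\{(n+1)/2\}$, giving $\Delta=-T_{(n-1)/2}+T_{(n+1)/2}=-T_{(n-1)/2}$. Cases~(2) and~(4) are similar in spirit: for Case~(2), after computing $c=3^{-1}\bmod n$ and writing out $S$, a case split on $n\bmod 3$ shows that the indicator jumps precisely at $s\equiv 0$ or $s\equiv n\bmod 3$, recovering the weights $w_r$; for Case~(4), $k=n-3$ forces $c=(n-1)/2$ and one computes $j(n-2)\equiv-2j\bmod n$ to find $S=\{3,5,\ldots,n-2\}$, giving the alternating sum that begins at $r=2$.

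The most intricate cases are~(6) and~(7), which form the main obstacle. In Case~(6), $k=(n-3)/2$ gives $k+1=(n-1)/2$, whose inverse modulo $n$ is $-2\equiv n-2$, so $c=n-2$. The set $S_{n,k}=\{-j/2\bmod n : 1\le j\le n-3\}$ must be computed by splitting $j$ according to parity, after which one finds $S=\{1,\ldots,n-1\}\setminus\{1,(n+1)/2\}$; the indicator then has exactly two jumps, at $s=1$ (coefficient $-1$) and at $s=(n-1)/2$ (coefficient $+1$), yielding $T_{(n-1)/2}-T_1$. Case~(7) has $c=3$ in both subcases, so $S$ has size two; the main task is to reduce $2(k+1)$ modulo $n$ correctly (it lands at $(n+2)/3$ in the $n\equiv 1\bmod 3$ subcase) and to verify that the other candidate indices, such as $2k+1$ and $2(k+1)$, exceed $(n-1)/2$ so that the corresponding $T_r$ vanish. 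The technical point throughout is the bookkeeping of which elements of $S_{n,k}$ fall into the in-range window $\{1,\ldots,(n-1)/2\}$, and for the families parameterized by $n\bmod 3$ in Case~(7) one also confirms the elementary identities $(k+2)/2=(n+2)/3$ and $k/2=(n-1)/3$.
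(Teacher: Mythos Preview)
Your argument is correct and follows essentially the same route as the paper: both compute $c$ and the set $S_{n,k}$ in each case and read off the coefficients from Corollary~\ref{Capplicationk12}. Your treatment is in fact somewhat more uniform than the paper's: you package everything into the single Iverson-bracket identity
\[
\Delta=\sum_{s=1}^{(n-1)/2}\bigl([s\in S_{n,k}]-[s+1\in S_{n,k}]\bigr)\,T_s
\]
and specialize, whereas the paper handles some cases (notably (3) and (6)) by reverting to the sheet/edge description from Proposition~\ref{Pbetween} and omits the details for~(4). Your parity-splitting computation of $S_{n,k}$ in Case~(6) and the range checks in Case~(7) are exactly what is needed and agree with the paper's conclusions.

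One small point worth tightening: the convention ``$T_r=0$ for $r\notin\{1,\dots,(n-1)/2\}$'' is not literally how the paper defines $T_r$ (the defining formula extends to all $r$ with $T_{n-r}=-T_r$), and with that natural extension the raw sum in Corollary~\ref{Capplicationk12} is $2\Delta$ rather than $\Delta$. Your Iverson formula is nevertheless correct: since $\Delta$ is $\epsilon$-invariant it lies in the span of $T_1,\dots,T_{(n-1)/2}$, and comparing the coefficient of $[E_1]\wedge[E_{1+s}]$ on both sides of Theorem~\ref{Tmaintheorem} shows directly that the coefficient of $T_s$ is $[s\in S_{n,k}]-[s+1\in S_{n,k}]$. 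Stating it this way would remove any ambiguity about the convention.
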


\begin{proof}
\begin{enumerate}
\item If $k=1$, then $c=(n+1)/2$.
Then $r \in S_{n,k}$ if and only if $r$ is even.

\item Write $c=(n+1)/3$ if $n \equiv 2 \bmod 3$ and $c=(2n+1)/3$ if $n \equiv 1 \bmod 3$.
Write $\ell = n-3$.  Then 
\[c_{I,J} = \begin{cases}
-1 & \text{ if } I-J \equiv \ell+1,\ 2\ell+1,\ \ldots,\ (c-1)\ell+1 \bmod n;\\ 
+1 & \text{ if } I-J \equiv \ell,\ 2\ell,\ \ldots,\ (c-1)\ell \bmod n;\\
0 & otherwise. 
\end{cases}\]

\item If $k=n-2$, then $c=n-1$.  In this case, 
we cover every sheet $0,1,2,\ldots, n-2, n-1$ in order, starting with sheet $0$ and ending with sheet $n-1$. 
It is more convenient to consider the edges between $\tau$ and $\tau^{-1}$, which are $(\epsilon^{n-1} \tau)^{-1}$
and $\epsilon \tau$.  So $E_0 \wedge E_1$ and $-E_{n-1} \wedge E_0$ appear in $\Delta$.
So $c_{I,J} = -1$ (resp.\ $+1$) if $J-I \equiv 1 \bmod n$ (resp.\ $J-I \equiv n-1 \bmod n$) and $c_{I,J}=0$ otherwise.

\item If $k=n-3$, then $c=(n-1)/2$.  We omit the details.  

\item If $k=(n-1)/2$, then $c=2$.  It follows that
$c_{I,J} = -1$ (resp.\ $+1$) only if $J-I  \equiv (n-1)/2 \bmod n$ 
(resp.\ $(n+1)/2  \bmod n$). 

\item If $k=(n-3)/2$, then $c=n-2$.  In this case, we cover all but the $(n-1)$st sheet, starting with sheet $0$, then 
$(n+1)/2$, then $1$, then $(n+3)/2$, etc.
It is more convenient to consider the edges between $\tau$ and $\tau^{-1}$, which are
$(\epsilon^{n-1} \tau)^{-1}$, $\epsilon \tau$, $\epsilon^{(n+1)/2} \tau$, and $(\epsilon^{(n-1)/2} \tau)^{-1}$.
So $c_{I,J} = -1$ if $J-I \equiv (n+1)/2  \text{ or } 1 \bmod n$,  $c_{I,J} = +1$ if $J-I \equiv (n-1)/2  \text{ or } n-1 \bmod n$, and 
$c_{I,J} = 0$ otherwise. 

\item If $k=(n-2)/3$ and $n \equiv 2 \bmod 3$, (resp.\ $k=(2n-2)/3$ and $n \equiv 1 \bmod 3$), 
then $c=3$.  Let $\ell = n-(k+1)$ which equals $(2n-1)/3$ if $n \equiv 2 \bmod 3$ and equals $(n-1)/3$ 
if $n \equiv 1 \bmod 3$.  Then $c_{I,J} = -1$ if $J-I \equiv \ell, 2\ell \bmod n$ and $c_{I,J} = +1$ if $J-I \equiv \ell+1, 2\ell +1 \bmod n$.
\end{enumerate}
\end{proof}

Corollary~\ref{Capplicationk12} determines $\Delta$ for all values of $k$ when $3 \leq n < 11$
and for all but two values of $k$ when $n=11$.
We include these two as final examples.

\begin{example}
If $n=11$ and $k=6$, then $c = 8$ and $\Delta = -T_1+T_3-T_4$.
\end{example}

\begin{example}
If $n=11$ and $k=7$, then $c = 7$ and $\Delta = -T_1+T_2-T_3+T_5$.
\end{example}



\bibliographystyle{amsalpha}
\bibliography{biblio.bib}

\end{document}